\pgfplotsset{compat=1.18}
\pgfplotsset{
    myplotstyle/.style={
    ylabel style={align=center, font=\bfseries\boldmath},
    xlabel style={align=center, font=\bfseries\boldmath},
    x tick label style={font=\bfseries\boldmath},
    y tick label style={font=\bfseries\boldmath},
    scaled ticks=false,
    every axis plot/.append style={thick},
    xmin = 0 , xmax=100, ymin = 0, ymax = 3,
    },
}
\theoremstyle{plain}
\newtheorem{theorem}{Theorem}[section]
\newtheorem{proposition}[theorem]{Proposition}
\newtheorem{lemma}[theorem]{Lemma}
\theoremstyle{definition}
\theoremstyle{remark}
\newtheorem{remark}{Remark}[theorem]
\algrenewcommand\algorithmicrequire{\textbf{Precondition:}}  
\algrenewcommand\algorithmicensure{\textbf{Postcondition:}}
\definecolor{mycolor1}{rgb}{0.59,0.3,0.0}%
\definecolor{mycolor2}{rgb}{0,0.59,0.2}%
\definecolor{mycolor3}{rgb}{0.6,0,0.2}
\renewcommand{\paragraph}[1]{\textbf{#1}}
\def\thm@space@setup{%
  \thm@preskip=\parskip \thm@postskip=0pt
}
\title{A Variational Perspective on High-Resolution ODEs}
\author{%
  Hoomaan ~Maskan \\
  Umeå University\\
  \texttt{hoomaan.maskan@umu.se} \\
   \And
   Konstantinos ~C.~Zygalakis \\
   University of Edinburgh \\
   \texttt{k.zygalakis@ed.ac.uk} \\
   \And
     Alp ~Yurtsever \\
  Umeå University\\
  \texttt{alp.yurtsever@umu.se} \\
}
\begin{document}

\maketitle

\begin{abstract}
We consider unconstrained minimization of smooth convex functions. We propose a novel variational perspective using forced Euler-Lagrange equation that allows for studying high-resolution ODEs. Through this, we obtain a faster convergence rate for gradient norm minimization using Nesterov's accelerated gradient method. Additionally, we show that Nesterov's method can be interpreted as a rate-matching discretization of an appropriately chosen high-resolution ODE. Finally, using the results from the new variational perspective, we propose a stochastic method for noisy gradients. Several numerical experiments compare and illustrate our stochastic algorithm with state of the art methods.
\end{abstract}

\section{Introduction}

Smooth convex minimization is a fundamental class in optimization with broad applications and a rich theoretical foundation that enables the development of powerful methods. In fact, the theory and methods developed for other problem classes, such as non-smooth convex or smooth non-convex optimization, often build upon the work done in smooth convex optimization. As a result, the study of smooth convex minimization has a significant impact on the broader field of optimization.
Over the last two decades, first-order methods --those relying solely on gradient information, in contrast to methods like Newton's that require Hessian information-- have seen a lot of interest both in theory and applications due to their efficiency and adaptability for large-scale data-driven applications. Among these, gradient descent stands as one of the simplest and oldest. When equipped with an appropriate step-size, gradient descent guarantees a suboptimality gap (objective residual) of order $\mathcal{O}(1/k)$ after $k$ iterations.

In his seminal work, \citet{Nesterov1983AMF} has shown that the gradient method can achieve faster rates by incorporating momentum deviations. Nesterov's accelerated gradient algorithm (NAG) ensures a convergence rate of $\mathcal O (1/k^2)$ which is an order of magnitude faster than the gradient descent. Remarkably, this rate matches information-theoretical lower bounds for first-order oracle complexity, meaning that NAG is optimal and no other first-order method can guarantee an essentially faster convergence rate \citep{nesterov2003introductory}. 

The original proof of \citet{Nesterov1983AMF}, known as the \emph{estimate sequence technique}, is a highly algebraic and complex procedure, difficult to interpret, and provides arguably limited insight into why momentum deviations help with the convergence rates \citep{hu2017dissipativity}. Therefore, many researchers have tried to provide a better understanding of momentum-based acceleration through different perspectives. For example, \citet{JMLR:v17:15-084}, \citet{Shi2021UnderstandingTA}, and \citet{sanz2021connections} consider a \textit{continuous time perspective}; \citet{Lessard2016AnalysisAD} and \citet{doi:10.1137/17M1136845} use integral quadratic constraints and control systems with non-linear feedbacks; \citet{muehlebach2019dynamical,muehlebach2022constraints,muehlebach2023accelerated} present a dynamical perspective; \citet{attouch2020first,attouch2021convergence} utilize inertial dynamic involving both viscous damping and Hessian-driven damping; \citet{Zhu2014LinearCA} view acceleration as a linear coupling of gradient descent and mirror descent updates; and \citet{ahn2022understanding} provide an understanding of the NAG algorithm through an approximation of the proximal point method. Our work is focused on the \textit{continuous-time perspective}.

In \citep{JMLR:v17:15-084}, the authors derive a second-order ordinary differential equation (ODE) that exhibits trajectories similar to those of the NAG algorithm in the limit of an infinitesimal step-size $s \to 0$. This result has inspired researchers to analyze various ODEs and discretization schemes to gain a better understanding of the acceleration phenomenon. Notably, \citet{WibisonoE7351} demonstrate that the ODE presented in \citep{JMLR:v17:15-084} is a special case of a broader family of ODEs that extend beyond Euclidean space. They achieve this by minimizing the action on a Lagrangian that captures the properties of the problem template, an approach known as the \textit{variational perspective}, and they discretize their general ODE using the \textit{rate-matching} technique. In a related vein, \citet{Shi2021UnderstandingTA} proposed substituting the low-resolution ODEs (LR-ODEs) introduced in \citep{JMLR:v17:15-084} with high-resolution ODEs (HR-ODEs) which can capture trajectories of NAG more precisely. 

Our main contribution in this paper is a novel extension of the variational perspective for HR-ODEs. A direct combination of these two frameworks is challenging, as it remains unclear how the Lagrangian should be modified to recover HR-ODEs. To address this problem, we propose an alternative approach that preserves the Lagrangian but extends the variational perspective. More specifically, instead of relying on the conventional Euler-Lagrange equation, we leverage the forced Euler-Lagrange equation that incorporates external forces acting on the system. By representing the damped time derivative of the potential function gradients as an external force, our proposed variational perspective allows us to reconstruct various HR-ODEs through specific damping parameters. More details are provided in \Cref{section2}.

Other contributions of our paper are as follows: 
In \Cref{section3}, we show that our proposed variational analysis yields a special representation of NAG leading to superior convergence rates than \citep{shi2019acceleration} in terms of gradient norm minimization. 
In \Cref{section4}, we propose an HR-ODE based on the rate-matching technique. We demonstrate that NAG can be interpreted as an approximation of the rate-matching technique applied to a specific ODE. 
Then, in \Cref{section5}, we extend our analysis to a stochastic setting with noisy gradients, introducing a stochastic method that guarantees convergence rates of $\tilde{\mathcal{O}}(1/k^{1/2})$ for the expected objective residual and $\tilde{\mathcal{O}}(1/k^{3/4})$ for the expected squared norm of the gradient. 
Finally, in \Cref{sec_numerical}, we present numerical experiments to demonstrate the empirical performance of our proposed method and to validate our theoretical findings.

\paragraph{Problem template and notation.} %
We consider a generic unconstrained smooth convex minimization template:
\begin{align}\label{problem}
    \min_{x\in \mathbb{R}^n} f(x)
\end{align}
where $f:\mathbb{R}^n\rightarrow \mathbb{R}$ is convex and $L$-smooth, meaning that its gradient is Lipschitz continuous:
\begin{align}\label{cvx-smthness}
\|\nabla f(x)-\nabla f(y)\|\leq L \|x-y\|, \quad \forall x, y \in \mathbb{R}^n,
\end{align}
with $\|\cdot\|$ denoting the Euclidean norm. We denote the class of \(L\)-smooth convex functions by \(\mathcal{F}_L\). 

Throughout, we assume that the solution set for Problem~\eqref{problem} is non-empty, and we denote an arbitrary solution by $x^*$, hence $f^* := f(x^*) \leq f(x)$ for all $x \in \mathbb{R}^n$. 

\paragraph{The NAG Algorithm.} Given an initial state $x_0 = y_0 \in \mathbb{R}^n$ and a step-size parameter $s > 0$, %
the NAG algorithm updates the variables $x_k$ and $y_k$ iteratively as follows:
\begin{equation}\label{eqn_Nest_alg}\tag{NAG}
\begin{aligned}
    y_{k+1}&=x_k -s\nabla f(x_k), \\
    x_{k+1}&= y_{k+1}+\frac{k}{k+3} (y_{k+1}-y_{k}).
\end{aligned}
\end{equation}

\paragraph{Low-Resolution and High-Resolution ODEs.} 
Throughout, by LR-ODE, we refer to the second-order ODE introduced and studied in \citep{JMLR:v17:15-084}, which exhibits a behavior reminiscent of momentum-based accelerated methods like Heavy Ball (HB) and NAG in the limit of step-size \(s\rightarrow 0\). Unfortunately, this generic ODE cannot distinguish the differences between HB and NAG. It is important to note, however, that the guarantees of HB and NAG differ significantly in discrete time. Specifically, HB has guarantees for only a specific subset of Problem~\eqref{problem}, whereas NAG guarantees a solution for all instances of \eqref{problem} with appropriate step sizes. Therefore, it's worth noting that the LR-ODE may not fully capture certain important aspects of the NAG trajectory. Conversely, by HR-ODEs, we refer to the ODEs proposed in \citep{Shi2021UnderstandingTA}, which extend the LR-ODE by incorporating gradient correction terms. Through this extension, HR-ODEs provide a more precise representation of the trajectory for these distinct algorithms.

\section{External Forces and High-Resolution ODEs}\label{section2}

Consider the Lagrangian
\begin{align}\label{Lagrangian2}
    \mathcal{L}(X_t,\dot{X}_t,t) =e^{\alpha_t+\gamma_t}\left(\frac{1}{2}\|e^{-\alpha_t}\dot{X}_t\|^2-e^{\beta_t}f(X_t)\right). 
\end{align}
Here, \(\dot{X}_t\in \mathbb{R}^d\) is the first time-derivative of \(X(t)\), and \(\alpha_t,\beta_t,\gamma_t:\mathbb{T}\rightarrow \mathbb{R}\) are continuously differentiable functions of time that correspond to the weighting of velocity, the potential function \(f\), and the overall damping, respectively. Using variational calculus, we define the action for the curves \(\{X_t:t\in \mathbb R\}\) as the functional \(\mathcal{A}(X)=\int_{\mathbb R}\mathcal{L}(X_t,\dot X_t,t)dt \). In the absence of external forces, a curve is a stationary point for the problem of minimizing the action \(\mathcal{A}(X)\) \textit{if and only if} it satisfies the Euler Lagrange equation \({\tfrac{d}{dt}\{  \tfrac{\partial \mathcal{L}}{\partial \dot{X}_t}(X_t,\dot{X}_t,t)  \}=\tfrac{\partial \mathcal{L}}{\partial X_t}(X_t,\dot{X}_t,t)}\). This was used in \citep{WibisonoE7351,wilson2021lyapunov} to calculate the LR-ODEs for convex and strongly convex functions\footnote{\noindent \citet{wilson2021lyapunov} use different Lagrangian for strongly convex functions, but the methodology is the same.}. Note that the Euler-Lagrange equation as written, does not account for an external force, \(F\) (which is non-conservative). In this case, the Euler-Lagrange equation should be modified to the forced Euler-Lagrange Equation
\begin{align}\label{fEL}
        \frac{d}{dt}\left\{  \frac{\partial \mathcal{L}}{\partial \dot{X}_t}(X_t,\dot{X}_t,t)  \right\}-\frac{\partial \mathcal{L}}{\partial X_t}(X_t,\dot{X}_t,t)=F,
\end{align}
which itself is the result of integration by parts of Lagrange d'Alembert principle \citep{campos2021discrete}. Using the Lagrangian (\ref{Lagrangian2}) we have
\begin{align}\label{Lagrange_par_der}
  &   \frac{\partial \mathcal{L}}{\partial \dot{X}_t}(X_t,\dot{X}_t,t)  =e^{\gamma_t}(e^{-\alpha_t}\dot{X}_t),\quad \frac{\partial \mathcal{L}}{\partial X_t}(X_t,\dot{X}_t,t)= -e^{\gamma_t+\alpha_t+\beta_t}(\nabla f(X_t)).
\end{align}
Substituting (\ref{Lagrange_par_der}) in (\ref{fEL}) gives
\begin{align}\label{HR_general_ODE_F}
    \Ddot{X}_t + (\dot{\gamma}_t-\dot{\alpha}_t)\dot{X}_t+e^{2\alpha_t+\beta_t}\nabla f(X_t) =e^{\alpha_t-\gamma_t} F.
\end{align}
In what follows, we will present various choices of the external force \(F\), including two for convex functions and one for strongly convex functions.

\subsection{Convex Functions}
\paragraph{First choice for convex functions.} Let us first consider the following external force:
\begin{equation}
    F = -\sqrt{s}e^{\gamma_t}\frac{d}{dt}[e^{-\alpha_t}\nabla f(X)].
\end{equation}
    In this case, (\ref{HR_general_ODE_F}) becomes
    \begin{align}\label{HR_general_ODE_F_laborde}
    \Ddot{X}_t + (\dot{\gamma}_t-\dot{\alpha}_t)\dot{X}_t+e^{2\alpha_t+\beta_t}\nabla f(X_t) = -\sqrt{s}e^{\alpha_t}\frac{d}{dt}\left[e^{-\alpha_t}\nabla f(X_t)\right].
\end{align}
It is possible to show the convergence of \(X_t\) to \(x^*\) and establish a convergence rate for this as shown in the following theorem (proof in Appendix A.1). The proof of this theorem (and the subsequent theorems in this section) is based on the construction of a suitable Lyapunov function for the corresponding ODE (\textit{e.g.} see \citep{siegel2019accelerated,shi2019acceleration,attouch2020first,attouch2021convergence}). This non-negative function attains zero only at the stationary solution of the corresponding ODE and decreases along the trajectory of the ODE \citep{khalil2002nonlinear}. For this theorem (and the subsequent theorems), we will define a proper Lyapunov function and prove sufficient decrease of the function \(f\) along the corresponding ODE trajectory.
\begin{theorem}\label{Theorem_ODE_laborde}
Under the ideal scaling conditions $\dot\beta_t\leq e^{\alpha_t}$ and $ \dot\gamma_t=e^{\alpha_t}$, \(X_t\) in (\ref{HR_general_ODE_F_laborde}) will satisfy 
$$f(X_t)-f(x^*)\leq \mathcal{O}(e^{-\beta_t})$$
for $f\in \mathcal{F}_L$.
\end{theorem}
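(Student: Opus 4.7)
The plan is to construct a Lyapunov function that extends the standard one from the variational framework of \citep{WibisonoE7351} by absorbing the high-resolution correction term into the norm factor. The natural candidate, motivated by the right-hand side of (\ref{HR_general_ODE_F_laborde}) and the form of the external force, is
\begin{equation*}
    \mathcal{E}(t) \;=\; e^{\beta_t}\bigl(f(X_t)-f^*\bigr)\;+\;\tfrac{1}{2}\bigl\|X_t+e^{-\alpha_t}\dot X_t+\sqrt{s}\,e^{-\alpha_t}\nabla f(X_t)-x^*\bigr\|^2.
\end{equation*}
Let $Z_t := X_t+e^{-\alpha_t}\dot X_t+\sqrt{s}\,e^{-\alpha_t}\nabla f(X_t)-x^*$ denote the vector inside the norm.

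The first step is to rewrite the ODE (\ref{HR_general_ODE_F_laborde}) as a compact identity for $\dot Z_t$. Using the identity $\frac{d}{dt}[e^{-\alpha_t}\dot X_t]=e^{-\alpha_t}(\ddot X_t-\dot\alpha_t\dot X_t)$ together with the ideal scaling $\dot\gamma_t=e^{\alpha_t}$, I would show that the left-hand side of the ODE equals $e^{\alpha_t}\tfrac{d}{dt}[X_t+e^{-\alpha_t}\dot X_t]$. Substituting into (\ref{HR_general_ODE_F_laborde}) and dividing by $e^{\alpha_t}$ yields
\begin{equation*}
    \dot Z_t \;=\; -e^{\alpha_t+\beta_t}\nabla f(X_t),
\end{equation*}
which is the key clean identity that makes the Lyapunov analysis tractable.

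Next I would differentiate $\mathcal{E}(t)$ along the trajectory. Expanding $\langle Z_t,\dot Z_t\rangle$ with the formula for $Z_t$ produces three terms: $e^{\alpha_t+\beta_t}\langle X_t-x^*,\nabla f(X_t)\rangle$, $e^{\beta_t}\langle \dot X_t,\nabla f(X_t)\rangle$, and $\sqrt{s}\,e^{\beta_t}\|\nabla f(X_t)\|^2$. The middle term exactly cancels the $e^{\beta_t}\langle\nabla f(X_t),\dot X_t\rangle$ coming from differentiating $e^{\beta_t}(f(X_t)-f^*)$. Applying convexity $\langle X_t-x^*,\nabla f(X_t)\rangle\geq f(X_t)-f^*$ to bound the first term, and using the scaling $\dot\beta_t\leq e^{\alpha_t}$ to absorb the remaining $\dot\beta_t e^{\beta_t}(f(X_t)-f^*)$ contribution, I obtain
\begin{equation*}
    \tfrac{d}{dt}\mathcal{E}(t) \;\leq\; (\dot\beta_t-e^{\alpha_t})\,e^{\beta_t}(f(X_t)-f^*)\;-\;\sqrt{s}\,e^{\beta_t}\|\nabla f(X_t)\|^2 \;\leq\; 0.
\end{equation*}
Monotonicity gives $\mathcal{E}(t)\leq \mathcal{E}(0)$, and since $\mathcal{E}(t)\geq e^{\beta_t}(f(X_t)-f^*)$, the claimed rate $f(X_t)-f^*\leq \mathcal{O}(e^{-\beta_t})$ follows immediately.

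The main obstacle is guessing the correct shape of $\mathcal{E}$, specifically the inclusion of the $\sqrt{s}\,e^{-\alpha_t}\nabla f(X_t)$ term inside the squared norm; without it the high-resolution correction on the right-hand side of the ODE would leave an uncontrolled $\tfrac{d}{dt}[e^{-\alpha_t}\nabla f(X_t)]$ contribution in $\tfrac{d}{dt}\mathcal{E}$. Once the correction is absorbed into $Z_t$, the ODE collapses into a first-order identity for $\dot Z_t$, and the remainder of the argument is the standard convexity-plus-ideal-scaling bookkeeping. Note that $L$-smoothness is not needed for the dissipation inequality itself; it only enters implicitly to guarantee existence/uniqueness of the trajectory and finiteness of $\mathcal{E}(0)$.
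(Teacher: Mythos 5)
Your proposal is correct and matches the paper's own proof essentially verbatim: you use the same Lyapunov function (with the $\sqrt{s}\,e^{-\alpha_t}\nabla f(X_t)$ correction absorbed into the squared-norm term), derive the same key first-order identity $\dot Z_t = -e^{\alpha_t+\beta_t}\nabla f(X_t)$, and close via the same convexity and ideal-scaling arguments. Your closing remark that $L$-smoothness is not actually invoked in the dissipation estimate is accurate and consistent with the paper's proof, which likewise uses only convexity.
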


Now, choosing parameters as
\begin{align}\label{prams_general}
        \alpha_t=\log (n(t)),\quad
    \beta_t=\log (q(t)/n(t)),\quad
    \dot\gamma_t=e^{\alpha_t}=n(t),
\end{align}
in (\ref{HR_general_ODE_F_laborde}) gives
\begin{align}\label{HR_cnts_general_npq_laborde}
\left\{
\begin{array}{l}
     \Ddot{X}_t + (n(t)-\frac{\dot n(t)}{n(t)}+\sqrt{s}\nabla^2 f(X_t))\dot{X}_t+(n(t)q(t)-\sqrt{s} \frac{\dot n(t)}{n(t)})\nabla f(X_t)=0,   \\
    F = -\sqrt{s}e^{\gamma_t}\frac{d}{dt}[e^{-\alpha_t}\nabla f(X)],  
\end{array}\right.
\end{align}
which reduces to 
\begin{align}\label{HR_cnts_general_npq2}
     \Ddot{X}_t + \left(\frac{p+1}{t}+\sqrt{s}\nabla^2 f(X_t)\right)\dot{X}_t+\left(Cp^2t^{p-2}+\frac{\sqrt{s}}{t}\right)\nabla f(X_t)=0,
\end{align}
by taking $n(t)=\frac{p}{t},q(t)=Cpt^{p-1}$. 

\begin{remark}
For \(p=2,C=1/4\), equation (\ref{HR_cnts_general_npq2}) corresponds to the (H-ODE) in \citep{pmlr-v108-laborde20a}.
\end{remark}

\paragraph{Second choice for convex functions.} Now, we consider an external force given by
\begin{equation}
    F = -\sqrt{s}e^{\gamma_t-\beta_t}\frac{d}{dt}\left[e^{-(\alpha_t-\beta_t)}\nabla f(X_t)\right]
\end{equation}
    In this case, replacing $F$ in (\ref{HR_general_ODE_F}) gives
    \begin{align}\label{HR_general_ODE_F_Shi}
    \Ddot{X}_t + (\dot{\gamma}_t-\dot{\alpha}_t)\dot{X}_t+e^{2\alpha_t+\beta_t}\nabla f = -\sqrt{s}e^{\alpha_t-\beta_t}\frac{d}{dt}[e^{-(\alpha_t-\beta_t)}\nabla f(X_t)].
\end{align}
We establish the following convergence result, and the proof can be found in Appendix A.2.
\begin{theorem}\label{Theorem_ODE_Shi}
Under the modified ideal scaling conditions \(\dot\beta_t\leq e^{\alpha_t}, \dot\gamma_t=e^{\alpha t}, \Ddot{\beta}_t\leq e^{\alpha_t}\dot \beta_t + 2\dot \alpha_t \dot \beta_t\), \(X_t\) in (\ref{HR_general_ODE_F_Shi}) will satisfy 
\[f(X_t)-f(x^*)\leq \mathcal{O}\left(\frac{1}{e^{\beta_t}+\sqrt{s}e^{-2\alpha_t}\dot \beta_t}\right),\]
for \(f \in \mathcal{F}_L\).
\end{theorem}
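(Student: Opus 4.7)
The plan is to construct a Lyapunov function $\mathcal{E}(t)$ whose time derivative is non-positive along trajectories of (\ref{HR_general_ODE_F_Shi}), paralleling the strategy for Theorem~\ref{Theorem_ODE_laborde} but tuned to the modified external force. Since the stated rate has the form $\mathcal{O}\!\left(1/(e^{\beta_t}+\sqrt{s}e^{-2\alpha_t}\dot\beta_t)\right)$, the potential-energy component of $\mathcal{E}(t)$ should carry precisely the prefactor $(e^{\beta_t}+\sqrt{s}e^{-2\alpha_t}\dot\beta_t)$, so that the inequality $\mathcal{E}(t)\leq\mathcal{E}(t_0)$ will immediately yield the claimed decay of $f(X_t)-f^\ast$.

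As a preparatory step, I would expand the force on the right-hand side of (\ref{HR_general_ODE_F_Shi}) via the chain rule,
$$-\sqrt{s}e^{\alpha_t-\beta_t}\tfrac{d}{dt}\!\left[e^{-(\alpha_t-\beta_t)}\nabla f(X_t)\right] \;=\; \sqrt{s}(\dot\alpha_t-\dot\beta_t)\nabla f(X_t) - \sqrt{s}\,\nabla^2 f(X_t)\,\dot X_t,$$
which rewrites the ODE as $\ddot X_t + (\dot\gamma_t-\dot\alpha_t+\sqrt{s}\nabla^2 f(X_t))\dot X_t + (e^{2\alpha_t+\beta_t}-\sqrt{s}(\dot\alpha_t-\dot\beta_t))\nabla f(X_t)=0$. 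This Hessian-driven-damping form is preferable because it eliminates the need to differentiate $\nabla^2 f$ when computing $\dot{\mathcal{E}}(t)$.

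I would then propose a Lyapunov candidate of the shape
$$\mathcal{E}(t)\;=\;\bigl(e^{\beta_t}+\sqrt{s}e^{-2\alpha_t}\dot\beta_t\bigr)\bigl(f(X_t)-f^\ast\bigr)\;+\;\tfrac{1}{2}\left\|X_t-x^\ast + e^{-\alpha_t}\dot X_t + \sqrt{s}\,e^{-\alpha_t}\nabla f(X_t)\right\|^2,$$
differentiate it along trajectories, substitute for $\ddot X_t$ from the rewritten ODE, and apply convexity $f(x^\ast)\geq f(X_t)+\langle\nabla f(X_t),x^\ast-X_t\rangle$ to handle the cross terms $\langle\nabla f(X_t),X_t-x^\ast\rangle$. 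The first ideal-scaling identity $\dot\gamma_t=e^{\alpha_t}$ should exactly cancel the $\|\dot X_t\|^2$ terms, while $\dot\beta_t\leq e^{\alpha_t}$ controls the sign of the surviving $(f(X_t)-f^\ast)$ multiplier. The third hypothesis $\ddot\beta_t\leq e^{\alpha_t}\dot\beta_t+2\dot\alpha_t\dot\beta_t$ is precisely what is needed to bound $\frac{d}{dt}\!\left(\sqrt{s}e^{-2\alpha_t}\dot\beta_t\right)(f(X_t)-f^\ast)$ by a non-positive quantity.

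The main obstacle is discovering the correct Lyapunov function: the coefficient in front of $(f(X_t)-f^\ast)$, the combination inside the squared norm, and the placement of the $\sqrt{s}\nabla f(X_t)$ correction must be tuned in concert so that the mixed contributions originating from $\sqrt{s}\nabla^2 f(X_t)\dot X_t$ and $\sqrt{s}(\dot\alpha_t-\dot\beta_t)\nabla f(X_t)$ telescope with the new $\sqrt{s}e^{-2\alpha_t}\dot\beta_t(f(X_t)-f^\ast)$ term introduced in the potential. The ansatz above may need slight modification — for instance, the $\sqrt{s}e^{-\alpha_t}\nabla f(X_t)$ term may need an additional $e^{\beta_t}$-dependent factor — and the right choice is pinned down by imposing that the coefficient of $\langle\nabla f(X_t),\cdot\rangle$ in $\dot{\mathcal{E}}(t)$ vanishes up to quantities controlled by the three scaling inequalities. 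Once this is achieved, $\dot{\mathcal{E}}(t)\leq 0$ integrates to $\mathcal{E}(t)\leq\mathcal{E}(t_0)$, which delivers the stated rate.
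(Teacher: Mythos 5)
Your proposal is correct and matches the paper's proof essentially line for line: you write down exactly the Lyapunov function used in Appendix A.2, namely $\varepsilon(t)=\tfrac{1}{2}\|X_t+e^{-\alpha_t}\dot X_t-x^*+\sqrt{s}e^{-\alpha_t}\nabla f(X_t)\|^2+(e^{\beta_t}+\sqrt{s}e^{-2\alpha_t}\dot\beta_t)(f(X_t)-f(x^*))$, and you correctly identify the roles of the three scaling conditions ($\dot\gamma_t=e^{\alpha_t}$ for cancellation, convexity plus $\dot\beta_t\leq e^{\alpha_t}$ for the leading term, and $\ddot\beta_t\leq e^{\alpha_t}\dot\beta_t+2\dot\alpha_t\dot\beta_t$ for the derivative of the new potential coefficient). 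The only cosmetic difference is that the paper rewrites the ODE as the total derivative $\tfrac{d}{dt}[X_t+e^{-\alpha_t}\dot X_t+\sqrt{s}e^{-\alpha_t}\nabla f(X_t)]=-(e^{\alpha_t+\beta_t}+\sqrt{s}e^{-\alpha_t}\dot\beta_t)\nabla f(X_t)$ rather than the Hessian-damping form you use, but the two are algebraically equivalent and your hedging that the ansatz ``may need slight modification'' is unnecessary --- it is already correct as written.
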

 Taking the same parameters as in (\ref{prams_general}) gives
\begin{gather}
    \begin{aligned}\label{HR_cnts_general_npq_SHI}
\left\{
\begin{array}{l}
     \Ddot{X}_t + (n(t)-\frac{\dot n(t)}{n(t)}+\sqrt{s}\nabla^2 f(X_t))\dot{X}_t+(n(t)q(t)-\sqrt{s} (\frac{\dot n(t)}{n(t)}-\frac{\dot q(t)n(t)-\dot n(t)q(t)}{n(t)q(t)}))\nabla f(X_t)=0,   \\
     F = -\sqrt{s}e^{\gamma_t-\beta_t}\frac{d}{dt}\left[e^{-(\alpha_t-\beta_t)}\nabla f(X_t)\right]. 
\end{array}\right.\raisetag{12pt}
\end{aligned}
\end{gather}
which reduces to
\begin{align}\label{HR_cnts_general_npq_p}
     \Ddot{X}_t + \left(\frac{p+1}{t}+\sqrt{s}\nabla^2 f(X_t)\right)\dot{X}_t+\left(Cp^2t^{p-2}+\frac{\sqrt{s}(p+1)}{t}\right)\nabla f(X_t)=0,
\end{align}
for \(n(t)=p/t,q(t)=Cpt^{p-1}\). 

\begin{remark}
    Note that setting \(C=1/4,p=2\) will lead to the ODE 
\begin{align}\label{HR_cnts_general_stable_ODE}
     \Ddot{X}_t + \left(\frac{3}{t}+\sqrt{s}\nabla^2 f(X_t)\right)\dot{X}_t+\left(1+\frac{3\sqrt{s}}{t}\right)\nabla f(X_t)=0.
\end{align}
 This ODE was discretized  using the Semi-Implicit Euler (SIE) and the Implicit Euler (IE) discretization schemes in \citep{shi2019acceleration}. The corresponding optimization algorithms were shown to accelerate. In addition, note that the convergence rate proved in \Cref{Theorem_ODE_Shi} is faster than its counterpart in \Cref{Theorem_ODE_laborde}.
\end{remark}

\subsection{Strongly Convex Functions} 
Our analysis is applicable to strongly convex functions as well. Consider the Lagrangian proposed in \citep{wilson2021lyapunov} for strongly convex functions
\begin{align}\label{strongly_cvx_lagrange}
    \mathcal{L}(X_t,\dot X_t,t)=e^{\alpha_t+\beta_t+\gamma_t}\left( \frac{\mu}{2}\|e^{-\alpha_t}\dot X_t\|^2-f(X_t)\right).
\end{align}
Then, the forced Euler-Lagrange equation (\ref{fEL}) becomes
\begin{align}\label{HR_FEL}
    \Ddot{X}+(-\dot \alpha_t + \dot 
 \gamma_t+ \dot \beta_t)\dot X+\frac{1}{\mu}e^{2\alpha_t}\nabla f(X)=\frac{F}{\mu e^{-\alpha_t + \gamma_t + \beta_t}}.
\end{align}
Taking \({F=-\sqrt{s}e^{\alpha_t+\gamma_t}\frac{d}{dt}(e^{\beta_t}\nabla f(X_t))}\) in (\ref{HR_FEL}) gives
\begin{align}\label{sc_eqn1}
    \Ddot{X}+(-\dot \alpha_t + \dot 
 \gamma_t+ \dot \beta_t)\dot X+\frac{1}{\mu}e^{2\alpha_t}\nabla f(X)=\frac{-\sqrt{s}e^{2\alpha_t-\beta_t}\frac{d}{dt}(e^{\beta_t}\nabla f(X_t))}{\mu }.
\end{align}
We can establish the following convergence result for \(X_t\) in (\ref{sc_eqn1}) to the unique minimizer \(x^*\). The proof of this result is deferred to Appendix A.3.
\begin{theorem}\label{Theorem3_1}
    Under the modified ideal scaling conditions \(\alpha_t=\alpha\), \(\dot \beta_t\leq e^{\alpha_t}\), \(\dot \gamma_t=e^{\alpha_t}\), and \(\dot \beta_t\geq 0\) \(X_t\) in (\ref{sc_eqn1}) satisfies
    \begin{align}\label{Theorem32_eqn1}
      f(X_t)-f(x^*)\leq \mathcal{O}(e^{-\beta_t})  
    \end{align}
    for \(\mu\)-strongly convex function \(f\).
\end{theorem}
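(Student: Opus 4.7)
}
The plan is to follow the Lyapunov-function recipe mentioned in the remark right after \Cref{Theorem_ODE_laborde}: construct a nonnegative quantity $\mathcal{E}_t$ that vanishes at $x^{\ast}$ and is non-increasing along trajectories of \eqref{sc_eqn1}, and then read off the rate from the coefficient of $f(X_t)-f^{\ast}$.  Since $\alpha_t=\alpha$ is constant, $\dot\alpha_t=0$ and $\dot\gamma_t=e^{\alpha}$, so \eqref{sc_eqn1} reduces to
\begin{equation*}
\ddot X_t + (e^{\alpha}+\dot\beta_t)\dot X_t + \tfrac{1}{\mu}e^{2\alpha}\nabla f(X_t) = -\tfrac{\sqrt{s}}{\mu}e^{2\alpha}\bigl(\dot\beta_t \nabla f(X_t)+\nabla^2 f(X_t)\dot X_t\bigr),
\end{equation*}
after expanding the total derivative on the right-hand side.

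The candidate Lyapunov function is of the Bregman/momentum form used in \citep{wilson2021lyapunov}, adapted to the high-resolution setting by incorporating a $\sqrt{s}$-correction that absorbs the forcing term.  Concretely, I would try
\begin{equation*}
\mathcal{E}_t \;=\; e^{\beta_t}\bigl(f(X_t)-f^{\ast}\bigr)+\frac{\mu}{2}\bigl\|Z_t-x^{\ast}\bigr\|^2, \qquad Z_t := X_t+e^{-\alpha}\dot X_t+\tfrac{\sqrt{s}}{\mu}e^{-\alpha}\nabla f(X_t),
\end{equation*}
and adjust the definition of $Z_t$ (adding e.g.\ a small $\sqrt{s}$-multiple of another term if needed) so that $\dot Z_t$ becomes proportional to $-(x^{\ast}-X_t)$ modulo nonpositive cross terms once the ODE is substituted.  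This is the standard trick: one designs the auxiliary variable so that the Hessian-dependent piece on the right-hand side of \eqref{sc_eqn1} is exactly what is produced by differentiating the $\sqrt{s}\nabla f(X_t)$ correction inside $Z_t$.

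With $\mathcal{E}_t$ fixed, the steps are: (i) differentiate, obtaining
$\dot{\mathcal{E}}_t=\dot\beta_t e^{\beta_t}(f(X_t)-f^{\ast})+e^{\beta_t}\langle\nabla f(X_t),\dot X_t\rangle+\mu\langle Z_t-x^{\ast},\dot Z_t\rangle$;
(ii) substitute $\ddot X_t$ from the ODE into the expression for $\dot Z_t$, so that the $\nabla^2 f(X_t)\dot X_t$ contribution on the right-hand side cancels against $\frac{\sqrt{s}}{\mu}e^{-\alpha}\nabla^2 f(X_t)\dot X_t$ that arises from differentiating $\nabla f(X_t)$ inside $Z_t$; (iii) apply $\mu$-strong convexity in the form $f(x^{\ast})\ge f(X_t)+\langle\nabla f(X_t),x^{\ast}-X_t\rangle+\tfrac{\mu}{2}\|x^{\ast}-X_t\|^2$ to dominate the inner-product term $\langle \nabla f(X_t), x^{\ast}-X_t\rangle$; and (iv) invoke the scaling conditions $\dot\beta_t\le e^{\alpha}$ together with $\dot\beta_t\ge 0$ to bound the remaining $\sqrt{s}\,\dot\beta_t\langle\nabla f(X_t),\cdot\rangle$ contributions and conclude $\dot{\mathcal{E}}_t\le 0$.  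Integrating yields $\mathcal{E}_t\le\mathcal{E}_0$ and thus $f(X_t)-f^{\ast}\le e^{-\beta_t}\mathcal{E}_0$, which is the claimed rate.

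The main obstacle I anticipate is choosing $Z_t$ so that the forcing term on the right-hand side of \eqref{sc_eqn1} (which contributes both a $\nabla^2 f(X_t)\dot X_t$ piece and a $\dot\beta_t\nabla f(X_t)$ piece) is absorbed without destroying the sign of $\dot{\mathcal{E}}_t$; the condition $\ddot\beta_t$ does not appear in this theorem (unlike \Cref{Theorem_ODE_Shi}), which suggests that the constancy $\alpha_t=\alpha$ is what makes the Hessian-driven term cancel cleanly, while $\dot\beta_t\ge 0$ is needed to ensure the residual $\sqrt{s}$-terms are of the correct sign.  Everything else should reduce to manipulations parallel to those in the proofs of \Cref{Theorem_ODE_laborde,Theorem_ODE_Shi} already deferred to the appendix.
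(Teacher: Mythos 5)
Your overall plan (design a $\sqrt{s}$-corrected momentum variable $Z_t$, build a Lyapunov function, differentiate, substitute the ODE, apply strong convexity and the scaling conditions) matches the paper's strategy, but your candidate Lyapunov function is structurally wrong in a way that breaks the argument at step (iv), and the issue is not the one you flagged.

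The decisive difference between the convex case (Theorems \ref{Theorem_ODE_laborde}, \ref{Theorem_ODE_Shi}) and the strongly convex case is where the factor $e^{\beta_t}$ sits. For the convex ODE \eqref{HR_general_ODE_F_laborde} the potential term is $e^{2\alpha_t+\beta_t}\nabla f$, so the derivative-form identity is $\dot Z_t=-e^{\alpha_t+\beta_t}\nabla f(X_t)$ (up to the correction), and the inner product $\langle Z_t-x^*,\dot Z_t\rangle$ already produces $-e^{\alpha_t+\beta_t}(f(X_t)-f^*)$; this is strong enough to absorb $\dot\beta_te^{\beta_t}(f(X_t)-f^*)$ via $\dot\beta_t\le e^{\alpha_t}$, which is why the convex Lyapunov can leave the quadratic term unweighted. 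For the strongly convex ODE \eqref{sc_eqn1} the potential term is $\tfrac{1}{\mu}e^{2\alpha_t}\nabla f$ \emph{without} an $e^{\beta_t}$, so $\dot Z_t$ scales like $e^{\alpha}\nabla f$ and $\mu\langle Z_t-x^*,\dot Z_t\rangle$ only yields $-e^{\alpha}(f(X_t)-f^*)$, a \emph{constant-rate} decrease. Your $\mathcal{E}_t=e^{\beta_t}(f(X_t)-f^*)+\tfrac{\mu}{2}\|Z_t-x^*\|^2$ asks this constant-rate term to dominate the unbounded $\dot\beta_te^{\beta_t}(f(X_t)-f^*)$, which is impossible once $e^{\beta_t}$ grows (even under $\dot\beta_t\le e^{\alpha}$). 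The paper fixes this by weighting the entire bracket, i.e.\ $\varepsilon(t)=e^{\beta_t}\big(\tfrac{\mu}{2}\|Z_t-x^*\|^2+f(X_t)-f^*\big)$; then the extra $\dot\beta_te^{\beta_t}\tfrac{\mu}{2}\|Z_t-x^*\|^2$ term is exactly what gets cancelled (using strong convexity and $\dot\beta_t\ge0$, $\dot\beta_t\le e^{\alpha}$) against the negative contribution from $e^{\beta_t}\mu\langle Z_t-x^*,\dot Z_t\rangle$, culminating in the paper's clean bound $\dot\varepsilon\le -\dot\beta_te^{\beta_t}\big\|\sqrt{\mu/2}\,e^{-\alpha}\dot X_t+\tfrac{\sqrt{s}e^{\alpha}}{\sqrt{2\mu}}\nabla f(X_t)\big\|^2\le 0$.

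A second, smaller issue: your correction inside $Z_t$ should be $\tfrac{\sqrt{s}e^{\alpha}}{\mu}\nabla f(X_t)$, not $\tfrac{\sqrt{s}}{\mu}e^{-\alpha}\nabla f(X_t)$. With the Hessian term on the right of \eqref{sc_eqn1} having coefficient $-\tfrac{\sqrt{s}e^{2\alpha}}{\mu}$, and with $e^{-\alpha}\ddot X_t$ in $\dot Z_t$, you need $b=e^{-\alpha}\cdot\tfrac{\sqrt{s}e^{2\alpha}}{\mu}=\tfrac{\sqrt{s}e^{\alpha}}{\mu}$ to cancel; your $e^{-\alpha}$ choice — which does work for the convex ODE, where the forcing has a different exponent — leaves a residual $-\tfrac{\sqrt{s}}{\mu}(e^{\alpha}-e^{-\alpha})\nabla^2f(X_t)\dot X_t$. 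You did anticipate that $Z_t$ might need adjusting, so this is a fixable slip; the missing $e^{\beta_t}$ weight on the quadratic term is the real gap, since without it no choice of $Z_t$ will make $\dot{\mathcal{E}}_t\le 0$ hold.
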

\begin{remark}
    Taking \(\alpha=\log(\sqrt{\mu})\) and \(\gamma_t=\beta_t=\sqrt{\mu}t\) in (\ref{sc_eqn1}) gives the NAG's corresponding HR-ODE 
\begin{align}\label{high-res-ODE}
    \Ddot{X}_t+(2\sqrt{\mu}+\sqrt{s}\nabla^2 f(X_t))\dot X_t +(1+\sqrt{\mu s })\nabla f(X_t)=0,
\end{align}
for \(\mu\)-strongly convex function \(f\) as in \citep{Shi2021UnderstandingTA}.
\end{remark}
\section{Gradient Norm Minimization of NAG}\label{section3}
One of the implications of our variational study on HR-ODEs in \Cref{section2} was the ODE (\ref{HR_cnts_general_npq_SHI}). Reformulating this ODE gives
\begin{align}\label{Two_oneline_ODE_perturbed}
   \left\{ \begin{array}{ll}
    & \dot{X_t}   =     n(t)(V_t-X_t)-\sqrt{s}\nabla f(X_t)\\
     &\dot{V_t}    =  -q(t)\nabla f(X_t) - \sqrt{s}\frac{\dot q(t)n(t)-\dot n(t)q(t)}{n^2(t)q(t)} \nabla f(X_t).
    \end{array}\right.
\end{align}
Applying the SIE on (\ref{Two_oneline_ODE_perturbed}) for \({X(t) \approx X(t_k), V(t)\approx V(t_k),n(t_k)=p/t_k,q(t_k)=Cpt_k^{p-1},}\) 
\({ p=2,t_k=k\sqrt{s}}\) and \(C=1/4\) gives 
\begin{align}\label{new_algorithm}
   \left\{ \begin{array}{ll}
    &x_{k+1}   =    x_{k} + \frac{2}{k}(v_k-x_{k+1})-{s}\nabla f(x_k),\\
     &v_{k+1}    = v_k -\tfrac{1}{2}(ks)\nabla f(x_{k+1})-s\nabla f(x_{k+1}), 
    \end{array}\right.
\end{align}
 which is exactly the NAG algorithm. The interpretation of the NAG method as the SIE discretization of (\ref{Two_oneline_ODE_perturbed}) has not been discussed before in the literature (see \citep{ahn2022understanding} for the four most studied representations). It is precisely this connection with the ODE (\ref{Two_oneline_ODE_perturbed}) though that inspires our choice of the Lyapunov function which in turn gives rise to a faster convergence rate. The following theorem formulates this result. The proof is in Appendix A.4 and it is based on the discrete Lyapunov analysis of (\ref{new_algorithm}). Similar convergence rate was very recently found by \citep{chen2022gradient} through \textit{implicit velocity} perspective on HR-ODEs which uses a different Lyapunov analysis than this work. 
\begin{theorem}\label{theorem4}
    Consider the update (\ref{new_algorithm}). Then, if \(f\in\mathcal{F}_L\) we have
    \[\min_{0\leq i\leq k-1}\|\nabla f(x_i)\|^2 \leq \frac{12}{k^3s^2}\|x_0-x^*\|^2,\]
    and
    \[f(x_k)-f(x^*)\leq \frac{2}{sk(k+2)}\|x_0-x^*\|^2\]
    for \(0\leq s\leq 1/L\), \(v_0=x_0\), and any \(x_0\in \mathbb R^n\).
\end{theorem}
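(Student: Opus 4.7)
The plan is to perform a discrete Lyapunov analysis of the iteration (\ref{new_algorithm}) that is quantitatively stronger than the usual analysis of NAG: rather than merely showing the Lyapunov is non-increasing, I will engineer a dissipation inequality whose right-hand side carries a term of order $k^2 s^2 \|\nabla f(x_k)\|^2$. This single quantitative strengthening simultaneously yields both stated bounds: monotonicity gives the $\mathcal{O}(1/(sk^2))$ function-value rate, while telescoping the quantitative decrement across $i=0,\ldots,k-1$ and invoking $\sum_{i=0}^{k-1} i^2 \ge k^3/6$ converts the dissipation into the $\mathcal{O}(1/(k^3 s^2))$ minimum-gradient-norm rate.

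I would work with a Lyapunov function of the form
\[
\mathcal{E}_k \;=\; s\,k(k+2)\,\bigl(f(x_k) - f(x^*)\bigr) \;+\; 2\,\bigl\| v_k - x^* + r_k\,\nabla f(x_k)\bigr\|^2,
\]
whose structure mimics the continuous Lyapunov function underlying \Cref{Theorem_ODE_Shi} for $p=2$, $C=1/4$ under the time--step correspondence $t \leftrightarrow k\sqrt{s}$. Here $r_k$ is an $\mathcal{O}(ks)$ gradient-correction coefficient reflecting the $\sqrt{s}$-order terms that distinguish the HR-ODE (\ref{Two_oneline_ODE_perturbed}) from its low-resolution counterpart; its exact value is fixed by requiring that the cross terms generated by $v_{k+1} - v_k$ and by the discrete increment of $r_k \nabla f(x_k)$ combine to leave a non-positive remainder. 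Evaluated at $k=0$ with $v_0 = x_0$, the quadratic term collapses to $\mathcal{E}_0 = 2\|x_0 - x^*\|^2$, already matching the leading constants in both bounds of the theorem.

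The computation of $\mathcal{E}_{k+1} - \mathcal{E}_k$ then proceeds by expanding each piece using (\ref{new_algorithm}). The identity $v_{k+1} - v_k = -\tfrac{(k+2)s}{2}\nabla f(x_{k+1})$ from the second update, together with the representation $v_k = \tfrac{k+2}{2}\,x_{k+1} - \tfrac{k}{2}\,x_k + \tfrac{ks}{2}\nabla f(x_k)$ derived from the first update, lets me rewrite the inner product $\langle v_k - x^*, \nabla f(x_{k+1})\rangle$ as an affine combination of $\langle x_{k+1} - x^*, \nabla f(x_{k+1})\rangle$ and $\langle x_k - x^*, \nabla f(x_{k+1})\rangle$ plus a gradient--gradient correction. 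Convexity of $f$ converts those inner products into lower bounds on $f(x_{k+1}) - f^*$ and on $f(x_{k+1}) - f(x_k)$, and these precisely cancel the function-value pieces arising from the coefficient growth $sk(k+2) \to s(k+1)(k+3)$ of the first Lyapunov term. Upper-bounding $f(x_{k+1}) - f(x_k)$ via the descent lemma with $s \le 1/L$ (equivalently, via the gradient substep $y_{k+1} = x_k - s\nabla f(x_k)$ implicit in (\ref{new_algorithm})) then produces the residual $\|\nabla f(x_k)\|^2$ contribution.

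The main obstacle, as is typical in this high-resolution style of analysis, is the algebraic bookkeeping of all $s$- and $s^2$-order terms so that, after applying Young's inequality to pair the cross term $-\tfrac{k(k+2)s^2}{2}\langle \nabla f(x_k), \nabla f(x_{k+1})\rangle$ against the diagonal pieces $\tfrac{(k+2)^2 s^2}{4}\|\nabla f(x_{k+1})\|^2$ and the analogous $\mathcal{O}(k^2 s^2)\|\nabla f(x_k)\|^2$ contributions coming from $r_k$, the residual is definite-signed and scales like $k^2$. Once a dissipation $\mathcal{E}_{k+1} \le \mathcal{E}_k - c\,k^2 s^2\,\|\nabla f(x_k)\|^2$ with an explicit $c > 0$ is established, both conclusions follow at once: the function-value bound from $s k(k+2)(f(x_k) - f^*) \le \mathcal{E}_k \le \mathcal{E}_0 = 2\|x_0-x^*\|^2$, and the gradient-norm bound from telescoping combined with $\sum_{i=0}^{k-1} i^2 \ge k^3/6$, where the factor $6/k^3$ multiplies the $2\|x_0 - x^*\|^2$ from $\mathcal{E}_0$ and, after the constant $c$ is pinned down, yields exactly the stated $12/(k^3 s^2)\|x_0 - x^*\|^2$.
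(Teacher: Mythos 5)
Your proposal is correct and follows essentially the same route as the paper's proof: the same discrete Lyapunov ansatz $\varepsilon(k) = \tfrac{sk(k+2)}{4}\bigl(f(x_k)-f^*\bigr) + \tfrac{1}{2}\|v_k-x^*\|^2$ (up to an overall scaling), the same quantitative per-step dissipation of order $k(k+2)s^2\|\nabla f(x_k)\|^2$ obtained by combining the smooth-convex interpolation inequality with cancellation of the mixed $\langle\nabla f(x_{k+1}),\nabla f(x_k)\rangle$ term, and the same telescoping to extract both conclusions. The only cosmetic difference is the extra free parameter $r_k$ in your quadratic term, which is in fact redundant and would be pinned to $r_k\equiv 0$: from the first update line one already has $v_k - x^* = x_{k+1}-x^*+\tfrac{k}{2}(x_{k+1}-x_k)+\tfrac{ks}{2}\nabla f(x_k)$, so the high-resolution gradient correction is carried by $v_k$ itself.
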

\begin{remark}[Comparison with state of the art]
    The rate in \Cref{theorem4} is improved compared to the previous rate found in \citep{Shi2021UnderstandingTA}, which is 
$$ \min_{0\leq i\leq k}\|\nabla f(x_i)\|^2 \leq \frac{8568}{(k+1)^3s^2}\|x_0-x^*\|^2,$$
for $0< s\leq 1/(3L)$ and $k\geq0$.
\end{remark}
\section{Rate-Matching Approximates the NAG  Algorithm}\label{section4}
The ODE (\ref{HR_cnts_general_npq2}) when \(p=2,C=1/4\) is equivalent to
\begin{align}\label{cont_rate_match_2_perturb}
    \left\{\begin{array}{l}
         \dot X_t = \frac{2}{t}(Z_t-X_t)-\sqrt{s}\nabla f(X_t),  \\
          \dot Z_t = -\frac{t}{2}\nabla f(X_t).
    \end{array}
    \right.
\end{align}
which can be viewed as a perturbation of the LR-ODE
\begin{align}\label{cont_rate_match_2_perturb2}
    \left\{\begin{array}{l}
         \dot X_t = \frac{2}{t}(Z_t-X_t),  \\
          \dot Z_t = -\frac{t}{2}\nabla f(X_t).
    \end{array}
    \right.
\end{align}
We now show that when the rate-matching technique in \citep{WibisonoE7351} is applied to (\ref{cont_rate_match_2_perturb2}), the final algorithm reveals similar behavior as (\ref{cont_rate_match_2_perturb}). This result is then used to approximately recover the NAG method using rate-matching discretization. \par
  Applying the rate-matching discretization on the ODE (\ref{cont_rate_match_2_perturb2}) gives
\begin{align}\label{rate_match_2}
    \left\{\begin{array}{l}
    x_{k+1}=\frac{2}{k+2}z_k+\frac{k}{k+2}y_k,\\
    y_{k}=x_k-s\nabla f(x_k),   \\
    z_{k}=z_{k-1} -\tfrac{1}{2}s k\nabla f(y_k)  .    
    \end{array}\right.
\end{align}
which has a convergence rate of \(\mathcal{O}(1/(s k^2))\) \citep{WibisonoE7351}. 
  In the following proposition, we study the behavior of (\ref{rate_match_2}) in limit of \(s \rightarrow 0\). The proof is given in Appendix A.5.
\begin{proposition}\label{prop1}
The continuous-time behavior of (\ref{rate_match_2}) is approximately
\begin{align}\label{cont_rate_match_2}
    \Ddot{X}_t+\left(\frac{3}{t}+\sqrt{s}\nabla^2 f(X_t)\right)\dot X_t+\left(1+\frac{\sqrt{s}}{t}\right)\nabla f(X_t)=0,
\end{align}
which is the the high-resolution ODE (\ref{HR_cnts_general_npq2}).
\end{proposition}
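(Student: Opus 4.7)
The plan is to derive the continuous-time limit of (\ref{rate_match_2}) by the Taylor-expansion method used in SBC, but retained one order deeper in $\sqrt{s}$ so that the HR-ODE corrections appear. The first move is to eliminate the auxiliary variable $z_k$. Rewriting the first line as $(k+2)x_{k+1} = 2z_k + ky_k$, forming its shifted copy $(k+1)x_k = 2z_{k-1} + (k-1)y_{k-1}$, subtracting, and invoking $2(z_k - z_{k-1}) = -sk\,\nabla f(y_k)$ together with $y_k = x_k - s\nabla f(x_k)$ gives the three-term recursion
\begin{equation*}
-sk\,\nabla f(y_k) = (k+2)(x_{k+1}-x_k) - (k-1)(x_k - x_{k-1}) + s\bigl[k\nabla f(x_k) - (k-1)\nabla f(x_{k-1})\bigr].
\end{equation*}

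Next, I introduce the time ansatz $t_k = k\sqrt{s}$ with a smooth interpolant $X_t$ satisfying $X(t_k)\approx x_k$, and Taylor-expand
\begin{align*}
x_{k\pm 1}-x_k &= \pm\sqrt{s}\,\dot X + \tfrac{s}{2}\ddot X \pm \tfrac{s^{3/2}}{6}\dddot X + O(s^2),\\
\nabla f(x_{k-1}) &= \nabla f(X) - \sqrt{s}\,\nabla^2 f(X)\dot X + O(s), \qquad \nabla f(y_k) = \nabla f(x_k) + O(s).
\end{align*}
Using $k=t/\sqrt{s}$, the velocity combination reduces to $t\sqrt{s}\,\ddot X + 3\sqrt{s}\,\dot X + \tfrac{s}{2}\ddot X + O(s^{3/2})$ (the $\tfrac{ts}{6}\dddot X$ contributions from the two second differences cancel), the gradient-difference bracket expands to $s\nabla f(X) + ts\,\nabla^2 f(X)\dot X + O(s^{3/2})$, and the left-hand side is $-t\sqrt{s}\,\nabla f(X) + O(s^{3/2})$. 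Equating the two sides and dividing through by $t\sqrt{s}$ yields
\begin{equation*}
\ddot X + \tfrac{3}{t}\dot X + \nabla f(X) + \sqrt{s}\,\nabla^2 f(X)\dot X + \tfrac{\sqrt{s}}{t}\nabla f(X) + \tfrac{\sqrt{s}}{2t}\ddot X = O(\sqrt{s}).
\end{equation*}
Treating the term $\tfrac{\sqrt{s}}{2t}\ddot X$ as a lower-order multiplicative perturbation of $\ddot X$ that is absorbed into the HR-ODE approximation error, rearrangement produces exactly (\ref{cont_rate_match_2}).

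The main obstacle is careful order bookkeeping. Both $\tfrac{s}{2}\ddot X$ (from the asymmetric second difference) and $s\nabla f(X)$ (the surviving constant piece of $s[k\nabla f(x_k)-(k-1)\nabla f(x_{k-1})]$) appear naturally at order $s$, so after division by $t\sqrt{s}$ they contribute at $O(\sqrt{s})$ and must be sorted from the genuine HR-ODE corrections $\sqrt{s}\,\nabla^2 f(X)\dot X$ and $\tfrac{\sqrt{s}}{t}\nabla f(X)$. Verifying that the $ts\,\dddot X$ pieces cancel exactly between the two second-difference terms, and justifying that $\tfrac{\sqrt{s}}{2t}\ddot X$ is absorbed as a perturbation of $\ddot X$ rather than kept as a distinct correction, are the delicate bookkeeping points on which the final identification with (\ref{cont_rate_match_2}) hinges.
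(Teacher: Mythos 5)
Your derivation follows the paper's route exactly: eliminate $z_k$ to obtain the same three-term recurrence, then Taylor-expand under the ansatz $t_k = k\sqrt{s}$ and divide through by $t\sqrt{s}$. The only difference is that your bookkeeping is more explicit than the paper's: its stated approximation rules silently absorb the asymmetric $\tfrac{\sqrt{s}}{2t}\ddot{X}$ term that you track, by replacing both one-sided differences $\tfrac{x_{k+1}-x_k}{\sqrt{s}}$ and $\tfrac{x_k-x_{k-1}}{\sqrt{s}}$ directly with $\dot{X}$, so your explicit justification for discarding that term matches what the paper does implicitly.
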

The ODE (\ref{cont_rate_match_2}) is the same as (\ref{cont_rate_match_2_perturb}).
In this sense, rate-matching implicitly perturbs the LR-ODE. The question that naturally arises is that when do we recover the HR-ODE (\ref{HR_cnts_general_stable_ODE}) (which corresponds to the NAG algortihm through the SIE discretization) from the rate-matching technique? To answer, we will first perturb the LR-ODE (\ref{cont_rate_match_2_perturb2}) in the second line. Then, the rate-matching discretization is applied. Perturbing (\ref{cont_rate_match_2_perturb2}) gives
\begin{align}\label{cont_rate_match_2_perturb3}
    \left\{\begin{array}{l}
         \dot X_t = \frac{2}{t}(Z_t-X_t),  \\
          \dot Z_t = -\frac{t}{2}\nabla f(X_t)-\sqrt{s}\nabla f(X_t).
    \end{array}
    \right.
\end{align}
Discretizing (\ref{cont_rate_match_2_perturb3}) using the rate-matching method with \(t_k=k\sqrt{s}\) gives
\begin{align}\label{rate_match_3}
    \left\{\begin{array}{l}
    x_{k+1}=\frac{2}{k+2}z_k+\frac{k}{k+2}y_k,\\
    y_{k}=x_k-s\nabla f(x_k),   \\
    z_{k}=z_{k-1} -\frac{s}{2} (k+2)\nabla f(y_{k})  ,
    \end{array}\right.
\end{align}
which is extremely close to the NAG algorithm. Indeed, replacing \(\nabla f(y_k)\) with \(\nabla f(x_k)\) in the third line of (\ref{rate_match_3}) gives exactly the NAG method. Typically, \(x_k\) and \(y_k\) are very close. This is due to \(x_k\) and \(y_k\) having a difference of order \(s\). Since in continuous time \(X(t_k)\approx Y(t_k)\) (due to \(s\rightarrow 0\)), the HR-ODE of (\ref{rate_match_3}) is (\ref{HR_cnts_general_stable_ODE}). This means that the corresponding HR-ODE of (\ref{rate_match_3}) is 
\begin{align}\label{cont_rate_match_2_perturb4}
    \left\{\begin{array}{l}
         \dot X_t = \frac{2}{t}(Z_t-X_t)-\sqrt{s}\nabla f(X_t),  \\
          \dot Z_t = -\frac{t}{2}\nabla f(X_t)-\sqrt{s}\nabla f(X_t).
    \end{array}
    \right.
\end{align}
which is the perturbed version of (\ref{cont_rate_match_2_perturb3}) and the HR-ODE associated with the NAG algorithm.

\section{Stochastic Extensions}\label{section5}
In this section, we propose a stochastic variation of (\ref{new_algorithm}). We model noisy gradients by adding i.i.d noise \(e_k\) with variance \(\sigma^2\) to the gradients. Consider the update
\begin{align}\label{new_alg_mixed_step}
    \left\{ \begin{array}{ll}
    &x_{k+1}   =    x_{k} + \frac{2s_k}{t_k}(v_k-x_{k+1})-\frac{\beta s_k}{\sqrt{L}}(\nabla f(x_k)+e_k),\\
     &v_{k+1}    = v_k -\tfrac{1}{2}(t_ks_k+\tfrac{2s_k \beta}{\sqrt{L}})(\nabla f(x_{k+1})+e_{k+1})
    \end{array}\right.
\end{align}
with $\beta \geq 2$. This update reduces to (\ref{new_algorithm}) when \(e_k=0, s_k=\sqrt{s}=\beta/\sqrt{L}, t_k=k\sqrt{s}\). We will refer to (\ref{new_alg_mixed_step}) as the Noisy NAG (NNAG) algorithm. NNAG is interesting due to its capability of dealing with perturbed gradients. This is the case in practical methods \textit{e.g.} SGD \citep{bottou2010large}, SAG \citep{schmidt2017minimizing}, SAGA \citep{defazio2014saga}, SVRG \citep{johnson2013accelerating}, and etc. The following convergence result holds for NNAG, and its proof is in Appendix A.6.

\begin{theorem}\label{Theorem6}
    Suppose $f\in\mathcal{F}_L$ and consider the NNAG method detailed in (\ref{new_alg_mixed_step}) with the following parameter choices:
    \begin{align}\label{theorem5.1-parameters}
        \beta \geq 2, ~~ s_k=\frac{c}{k^{\alpha}},~~\text{and}~~t_k=\sum_{i=1}^k s_i \qquad \text{for some} \qquad {c\leq \frac{1}{\sqrt{L}}} ~~ \text{and} ~~ \frac{3}{4}\leq\alpha<1.
    \end{align}
    We define the critical iteration $k_0$ as the smallest positive integer that satisfies 
    \begin{equation}
        k_0 \geq \bigg(\frac{\beta}{\tfrac{1}{c\sqrt{L}}+\tfrac{c\sqrt{L}}{8}(\sum_{i=1}^{k_0}\tfrac{1}{i^{\alpha}})^2}\bigg)^{1/\alpha}. %
    \end{equation}
    Then, the following bounds hold for all $k \geq k_0:$
    \begin{align}
        \mathbb E[f(x_k)]-f(x^*)\leq \tfrac{\mathbb E[\varepsilon(k_0)]+\frac{\sigma^2 c^4}{(1-\alpha)^2} \left[ k_0^{3-4\alpha}-k^{3-4\alpha} \right] + \frac{\sigma^2c^3\beta}{2\sqrt{L}(1-\alpha)(3\alpha -2)}\left[ k_0^{2-3\alpha}-k^{2-3\alpha} \right]+\frac{\beta^2c^2\sigma^2}{2L(2\alpha-1)}\left[ k_0^{1-2\alpha}-k^{1-2\alpha} \right]}{\frac{c^2}{4(1-\alpha)^2}\left((k^{1-\alpha}-1)^2\right)+\frac{c\beta}{2\sqrt{L}(1-\alpha)}\left(k^{(1-\alpha)}-1\right)}\nonumber
    \end{align}
    if $\alpha > 3/4$, and
    \begin{align}\label{Theorem7_rate2}
        \mathbb E[f(x_k)]-f(x^*)\leq \tfrac{\mathbb E[\varepsilon(k_0)]+2\sigma^2 c^4 \left[ \log(\frac{k}{k_0}) \right] + \frac{8\sigma^2c^3\beta}{\sqrt{L}}\left[ k_0^{-1/4}-k^{-1/4} \right]
        +\frac{\beta^2c^2\sigma^2}{L}\left[ k_0^{-1/2}-k^{-1/2} \right]}{4c^2\left((k^{1/4}-1)^2\right)+\frac{2c\beta}{\sqrt{L}}\left(k^{1/4}-1\right)}
    \end{align}
    if $\alpha=3/4$ with \(\varepsilon(k)= (\frac{t_k^2}{4}+\frac{t_k\beta}{2\sqrt{L}})(f(x_k)-f(x^*))+\frac{1}{2}\|v_k-x^*\|^2.\)
\end{theorem}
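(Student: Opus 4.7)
The approach is the stochastic extension of the discrete Lyapunov argument behind \Cref{theorem4}. I would take the Lyapunov function $\varepsilon(k)$ as defined in the statement and control $\mathbb E[\varepsilon(k+1)-\varepsilon(k)\mid\mathcal F_k]$, where $\mathcal F_k$ is the filtration generated by $e_1,\dots,e_k$. The NNAG update is affine in the noise, so every noise-linear term vanishes in conditional expectation while the squared-noise terms produce variance contributions $\sigma^2$ times polynomials in $s_k$ and $t_k$. The one-step inequality follows by: expanding $\|v_{k+1}-x^*\|^2$ via the second line of (\ref{new_alg_mixed_step}); converting the cross term $\langle \nabla f(x_{k+1}),v_k-x^*\rangle$ into $-(f(x_{k+1})-f(x^*))$ plus a controlled residual by convexity; bounding the descent $f(x_k)\to f(x_{k+1})$ by $L$-smoothness; and matching the weight increments $t_{k+1}^2/4-t_k^2/4 = s_{k+1}^2/4 + s_{k+1}t_k/2$ and $s_{k+1}\beta/(2\sqrt L)$ against these pieces. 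The correction term $t_k\beta/(2\sqrt L)$ in $\varepsilon(k)$ plays the role of absorbing the $\sqrt s$-order gradient-correction pieces that distinguish the HR-ODE from the LR-ODE discretization, exactly as in \Cref{theorem4}. Rearranging the threshold condition yields $\beta s_{k_0}\sqrt L \leq 1 + L t_{k_0}^2/8$, which is precisely what guarantees that the deterministic residual in the one-step inequality is non-positive for every $k\geq k_0$.

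Taking full expectation and summing from $i=k_0$ to $i=k-1$ then gives
$$\mathbb E[\varepsilon(k)] \leq \mathbb E[\varepsilon(k_0)] + \sigma^2 \sum_{i=k_0}^{k-1}\bigl(A\,t_i^2 s_i^2 + B\,t_i s_i^2 + C\,s_i^2\bigr),$$
with the three coefficients arising from the expansion of the squared noise coefficient of $e_{i+1}$ in $v_{i+1}$, namely $\bigl(t_i s_i/2 + s_i\beta/\sqrt L\bigr)^2 = t_i^2 s_i^2/4 + t_i s_i^2\beta/\sqrt L + s_i^2\beta^2/L$. Substituting $s_i = c/i^\alpha$ and $t_i \leq c\,i^{1-\alpha}/(1-\alpha)$ and applying the integral test reduces each of the three sums to a $p$-series, producing the brackets $[k_0^{3-4\alpha}-k^{3-4\alpha}]$, $[k_0^{2-3\alpha}-k^{2-3\alpha}]$, $[k_0^{1-2\alpha}-k^{1-2\alpha}]$ that appear in the numerator, with matching constants. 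At the boundary $\alpha = 3/4$ the first of these sums diverges logarithmically, producing the $\log(k/k_0)$ in \eqref{Theorem7_rate2}. Lower bounding $\varepsilon(k) \geq (t_k^2/4 + t_k\beta/(2\sqrt L))(f(x_k)-f(x^*))$ together with $t_k \geq c(k^{1-\alpha}-1)/(1-\alpha)$ and dividing yields the stated rates.

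The main obstacle is the one-step inequality itself: the semi-implicit $x_{k+1}$ update (which depends on $v_k$ and on $e_k$) has to be resolved, and the resulting polynomial in $s_k,t_k$ has to be matched coefficient-by-coefficient against the weight increments $t_{k+1}^2/4-t_k^2/4$ and $s_{k+1}\beta/(2\sqrt L)$ so that the deterministic part cancels exactly under the $k_0$ condition. The noise entering through $x_{k+1}$ via $e_k$ must also be tracked, but by linearity it only augments the explicit constants $A,B,C$ without altering the scaling of the resulting $p$-series. Once the one-step inequality is secured, the remainder of the argument is a routine integral-test calculation that reproduces the two piecewise forms of the bound at $\alpha>3/4$ and $\alpha=3/4$.
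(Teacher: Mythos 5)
Your proposal is correct and follows essentially the same route as the paper's own proof in Appendix A.6: the same Lyapunov function $\varepsilon(k)$, the same one-step recursion obtained by expanding $\|v_{k+1}-x^*\|^2$ via the second line of the update together with the interpolation inequality for $L$-smooth convex functions, the same role of the $k_0$ threshold (your rearrangement $\beta s_{k_0}\sqrt{L}\leq 1+L t_{k_0}^2/8$ is exactly the paper's Lemma A.1 condition after substituting $s_{k_0}=c/k_0^{\alpha}$ and $t_{k_0}=c\sum_{i\le k_0}i^{-\alpha}$), the same noise-coefficient expansion producing the three $p$-series, and the same integral-test estimates that split at $\alpha=3/4$.
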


Next, we show that slight modifications to the NNAG method gives rise to another stochastic method with a similar convergence rate as the NNAG algorithm, but more transparent proof (see Appendix A.7). This proof results in a convergence rate for $\mathbb E\left[\min_{0\leq i\leq k-1}\|\nabla f(x_i)\|^2 \right]$ with a rate of $O(\log(k)/k^{(3/4)})$. It remains a future work to show similar result for the NNAG update.

\begin{theorem}\label{Theorem5}
    Suppose $f\in\mathcal{F}_L$ and consider the following modification of the NNAG method
   \begin{align}\label{new_algorithm_stochastic}
   \left\{ \begin{array}{ll}
    &x_{k+1}   =    x_{k} + \frac{2s_k}{t_k}(v_k-x_{k+1})-\frac{s_k}{\sqrt{L}}(\nabla f(x_k)+e_k),\\
     &v_{k+1}    = v_k -\tfrac{1}{2}((t_k)s_k)(\nabla f(x_{k+1})+e_{k+1})-s_k^2(\nabla f(x_{k+1})+e_{k+1}). 
    \end{array}\right.
\end{align}
with the same parameter choices as in \eqref{theorem5.1-parameters}. Then, the following convergence bounds hold:
    \begin{align}\label{conv_rate1_them6}
    \mathbb E[f(x_k)]-f(x^*)&\leq \left\{\begin{array}{lr}
         \frac{\mathbb E[\varepsilon (0)]+\frac{c^4\sigma^2}{8}\left[16(1+\log(k))+32+6\right]}{2c^2\left[2(k^{\frac{1}{4}}-1)^2+k^{-\frac{3}{4}}(k^{\frac{1}{4}}-1)\right]} & \quad \alpha=\frac{3}{4} \\
          \frac{\mathbb E[\varepsilon (0)]+\frac{c^4\sigma^2}{8}\left[\frac{(4\alpha -2)}{(1-\alpha)^2(4\alpha-3)}+\frac{4(4\alpha -1)}{(1-\alpha)(4\alpha -2)}+\frac{4(4\alpha )}{(4\alpha -1)}\right]}{\frac{c^2}{2(1-\alpha)}\left[\frac{(k^{1-\alpha}-1)^2}{2(1-\alpha)}+k^{-\alpha}(k^{(1-\alpha)}-1)\right]}& \quad 1>\alpha>\frac{3}{4}
    \end{array}\right.,
\end{align}
with $\mathbb E[\varepsilon (0)]=\frac{1}{2}\|v_0-x^*\|^2$. In addition, for $\alpha = 3/4$ we have
\begin{align}\label{conv_rate2_them6}
    \mathbb E\left[\min_{0\leq i\leq k-1}\|\nabla f(x_i)\|^2 \right] &\leq \frac{2\sqrt{L}\mathbb E[\varepsilon (0)]+(2c^4\sigma^2\sqrt{L})(2\log (k)+6+\frac{3}{4})}{16c^3\left( \frac{k^{3/4}-1}{3} +k^{1/4}-\frac{3}{2}+k^{1/2}\right)}.
\end{align}
\end{theorem}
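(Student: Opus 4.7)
The plan is to construct a Lyapunov function $\varepsilon(k)$ tailored to the modified update (\ref{new_algorithm_stochastic}) and analyze its expected one-step drift, then extract both the objective and gradient-norm bounds by telescoping. By analogy with the Lyapunov function used for \Cref{theorem4}, the form declared for $\varepsilon(k)$ in \Cref{Theorem6}, and guided by the denominators appearing in (\ref{conv_rate1_them6})--(\ref{conv_rate2_them6}), I would take
\[
\varepsilon(k) = \Bigl(\tfrac{t_k^2}{4} + \tfrac{s_k t_k}{\sqrt{L}}\Bigr)\bigl(f(x_k) - f(x^*)\bigr) + \tfrac{1}{2}\|v_k - x^*\|^2,
\]
with the exact coefficients of the weight pinned down by the cancellation requirements in the drift step below.

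\textbf{Drift step.} Expanding $\varepsilon(k+1) - \varepsilon(k)$ using (\ref{new_algorithm_stochastic}), I would upper bound $f(x_{k+1}) - f(x_k)$ by $L$-smoothness, so that substituting the first line of the update produces a useful negative $\|\nabla f(x_k)\|^2$ contribution together with an $\|x_{k+1}-v_k\|^2$ remainder; then use convexity of $f$ at $x^*$ to introduce the cross term $\langle \nabla f(x_{k+1}), v_k - x^*\rangle$, which is absorbed by the expansion of $\tfrac{1}{2}\|v_{k+1}-x^*\|^2 - \tfrac{1}{2}\|v_k-x^*\|^2$. The extra $-s_k^2(\nabla f(x_{k+1})+e_{k+1})$ piece on the $v$-update---absent from vanilla NAG and from the NNAG update of \Cref{Theorem6}---is what generates the $s_k t_k/\sqrt{L}$ weight and lets the bookkeeping close; this is the structural reason the proof is ``more transparent'' than that of \Cref{Theorem6}. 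Taking conditional expectation kills cross terms in $e_{k+1}$ by independence and mean-zero noise, leaving an inequality of the shape
\[
\mathbb{E}\bigl[\varepsilon(k+1)\mid \mathcal{F}_k\bigr]\leq \varepsilon(k) - c_1 s_k^2\|\nabla f(x_k)\|^2 + \sigma^2\bigl(c_2 s_k^4 + c_3 s_k^3 t_k + c_4 s_k^2 t_k^2\bigr),
\]
for explicit constants $c_1,\dots,c_4$ coming from the preceding expansion.

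\textbf{Telescoping.} Summing the previous inequality in expectation from $0$ to $k-1$ and inserting $s_k = c/k^\alpha$ with $t_k \sim c\,k^{1-\alpha}/(1-\alpha)$, the three noise sums scale as $\sum k^{-4\alpha}$, $\sum k^{1-4\alpha}$ and $\sum k^{2-4\alpha}$. For $\alpha = 3/4$ the last of these is $\Theta(\log k)$---precisely the source of the $\log(k)$ appearing in both (\ref{conv_rate1_them6}) and (\ref{conv_rate2_them6})---while for $\alpha>3/4$ all three are summable and produce the constant-in-$k$ brackets in (\ref{conv_rate1_them6}). Lower bounding $\varepsilon(k) \geq A_k (f(x_k) - f(x^*))$ with $A_k = t_k^2/4 + s_k t_k/\sqrt{L}$ and dividing by $A_k$ yields (\ref{conv_rate1_them6}). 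For (\ref{conv_rate2_them6}) I would retain the previously discarded $-c_1 s_k^2 \|\nabla f(x_k)\|^2$ term, rearrange to $c_1 \sum_{i=0}^{k-1} s_i^2\,\mathbb{E}\|\nabla f(x_i)\|^2 \leq \mathbb{E}[\varepsilon(0)] + (\text{noise sums})$, and invoke $\min_i \mathbb{E}\|\nabla f(x_i)\|^2 \leq \bigl(\sum_{i=0}^{k-1} s_i^2\bigr)^{-1}\sum_{i=0}^{k-1} s_i^2\,\mathbb{E}\|\nabla f(x_i)\|^2$; the explicit evaluation $\sum_{i=1}^{k-1} s_i^2 = c^2\sum i^{-3/2}$ for $\alpha = 3/4$ produces the $(k^{3/4}-1)/3 + k^{1/4} - 3/2 + k^{1/2}$ denominator of (\ref{conv_rate2_them6}).

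\textbf{Main obstacle.} The delicate point is aligning the weights in $A_k$ so that the deterministic drift is genuinely non-positive while still leaving a usable negative coefficient in front of $\|\nabla f(x_k)\|^2$ for the gradient-norm bound. The tension comes from the $\tfrac{L}{2}\|x_{k+1}-x_k\|^2$ term of $L$-smoothness, which when the first line of (\ref{new_algorithm_stochastic}) is substituted splits into a $\tfrac{2s_k}{t_k}(v_k - x_{k+1})$-part, an $s_k^2\|\nabla f(x_k)\|^2/L$-part, and a noise contribution; all three must be simultaneously matched against (i) the cross term in $\|v_{k+1}-x^*\|^2$, (ii) the telescoped increment $A_{k+1}(f(x_{k+1})-f(x^*)) - A_k(f(x_k)-f(x^*))$, and (iii) the convexity inequality at $x^*$. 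Verifying that $c \leq 1/\sqrt{L}$ is exactly what is needed for this coefficient to remain strictly negative, and keeping the sign bookkeeping consistent across both the $\alpha = 3/4$ and $\alpha>3/4$ regimes, is the bulk of the work.
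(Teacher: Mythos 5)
Your high-level plan (Lyapunov function of the form $A_k (f(x_k)-f^*)+\tfrac12\|v_k-x^*\|^2$, expected one-step drift, telescope, integral comparisons on the noise sums) matches the paper's proof, and your sketch of the $\alpha=3/4$ vs.\ $\alpha>3/4$ case split is also the right picture. But two of your specifics diverge from what actually closes, and one of them is a genuine gap.

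First, the divergences that are likely fixable. The paper's Lyapunov weight is $A_k=\tfrac{t_k^2}{4}+\tfrac{s_k t_k}{2}$, not $\tfrac{t_k^2}{4}+\tfrac{s_k t_k}{\sqrt{L}}$; since $s_k\le 1/\sqrt{L}$ these are not the same. You hedge on the coefficient, so this alone is a bookkeeping issue. More structurally, you propose to bound $f(x_{k+1})-f(x_k)$ with the descent lemma $\langle\nabla f(x_k),x_{k+1}-x_k\rangle+\tfrac{L}{2}\|x_{k+1}-x_k\|^2$, whereas the paper uses the co-coercivity form $f(x_{k+1})-f(x_k)\le\langle\nabla f(x_{k+1}),x_{k+1}-x_k\rangle-\tfrac{1}{2L}\|\nabla f(x_{k+1})-\nabla f(x_k)\|^2$. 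The latter is what makes the $\langle\nabla f(x_{k+1}),x_{k+1}-x_k\rangle$ term cancel exactly against the cross term from $\|v_{k+1}-x^*\|^2$ and leaves the favorable $-\tfrac{1}{2L}\|\nabla f(x_{k+1})-\nabla f(x_k)\|^2$ buffer. Your version introduces $\|x_{k+1}-x_k\|^2$ pieces that then have to be re-expanded via the $x$-update; this can in principle be made to work, but it is not the same cancellation and you have not shown that the signs close.

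Second, the genuine gap, in the gradient-norm bound \eqref{conv_rate2_them6}. You assert a drift of the form $-c_1 s_k^2\|\nabla f(x_k)\|^2$ and then claim that evaluating $\sum_{i=1}^{k-1}s_i^2=c^2\sum i^{-3/2}$ at $\alpha=3/4$ ``produces the $(k^{3/4}-1)/3+k^{1/4}-3/2+k^{1/2}$ denominator.'' That cannot be right: $\sum_{i\ge1} i^{-3/2}$ is a convergent series, so $\sum_{i=1}^{k-1}s_i^2=\Theta(1)$, not $\Theta(k^{3/4})$, and a bounded denominator would never yield a rate that decays in $k$. The coefficient the paper actually retains is $\tfrac{1}{2}\bigl(\tfrac{t_k^2}{4}+\tfrac{s_k t_k}{2}\bigr)\tfrac{s_k}{\sqrt{L}}=\Theta(s_k t_k^2)=\Theta(k^{2-3\alpha})$, which for $\alpha=3/4$ is $\Theta(k^{-1/4})$ and sums to $\Theta(k^{3/4})$; that is the true source of the denominator in \eqref{conv_rate2_them6}. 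Your $-c_1 s_k^2$ is a factor of $t_k^2/s_k=\Theta(k^{2-\alpha})$ too small. The fix is that after applying co-coercivity you should split the negative $\|\nabla f\|^2$ budget into two pieces: one used to absorb the positive noise-free quadratics (requiring $c\le 1/\sqrt{L}$) and one, of order $s_k A_k/\sqrt{L}$, carried forward for the telescoped gradient-norm sum. As written, your drift inequality loses the dominant part of that budget, and the stated denominator in \eqref{conv_rate2_them6} does not follow from it.
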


\begin{remark}
    The algorithm in (\ref{new_algorithm_stochastic}) reduces to (\ref{new_algorithm}) when \(e_k=0, s_k=\sqrt{s}=1/\sqrt{L}, t_k=k\sqrt{s}\).
\end{remark}

\begin{remark}[Connection to NAG]
    When there is no noise ($\sigma=0$) we can also allow parameter $\alpha$ to be zero. This is because we do not need to alleviate the effect of the noise with a decreasing step-size. Therefore, we recover the convergence rate of $O(1/k^2)$ for the NAG method when $c=1/\sqrt{L}$.
\end{remark}

\begin{remark}[Comparison with \citep{pmlr-v108-laborde20a}]
    Laborde et al., proposed a stochastic method with noisy gradients. Their method uses another presentation of the NAG algorithm (the presentation from EE discretization). Our rate (\ref{conv_rate1_them6}) has the same order of convergence as \citep{pmlr-v108-laborde20a}. However, their analysis did not achieve the bound (\ref{conv_rate2_them6}) (see \citep{pmlr-v108-laborde20a} Appendix C.4).
\end{remark}

\begin{remark}[Comparison between Theorems \ref{Theorem6} and \ref{Theorem5}]
    The rate in (\ref{Theorem7_rate2}) is asymptotically similar to (\ref{conv_rate1_them6}). However, the transient behavior of (\ref{Theorem7_rate2}) is faster than both (\ref{conv_rate1_them6}) and the rate in \citep{pmlr-v108-laborde20a} when \(L\) is large (see \Cref{fig1} top row). This is due to the tuning parameter \(\beta\) which is usually set to \(L\) or higher. This scenario (Large \(L\)) often happens in practice, \textit{e.g.} in training a two-layer Convolutional Neural Network (CNN) \citep{shi2022efficiently}. 
\end{remark}

\begin{remark}[Limitations and future directions]
     One limitation of our theoretical analysis is that our convergence result for NNAG holds only for a large enough number of iterations $k \geq k_0$. However, in our numerical experiments we observed that the same bounds hold also for \(k\leq k_0\), so we believe our result can be improved. Additionally, our proposed forces are currently defined only in Euclidean space, and we see potential for extending the framework to non-Euclidean spaces. 
\end{remark}

\section{Numerical Results}\label{sec_numerical}
In this section, we present our empirical results, divided into three parts: Theoretical upper bounds, binary classification with logistic regression, and classification with neural networks.

\paragraph{Upper Bounds.}
First, we compare the bounds in (\ref{Theorem7_rate2}) and (\ref{conv_rate1_them6}) with the Proposition~4.5 in \citep{pmlr-v108-laborde20a}. The results are depicted in \Cref{fig1}. In practical scenarios where \(L\) is large \citep{shi2022efficiently} the bound (\ref{Theorem7_rate2}) is lower than the other two for large enough iterations. This observation has motivated us to analyze the behavior of NNAG in practical scenarios such as binary classification and CNN training tasks.

\begin{figure}[!ht]
    \centering

    \begin{tikzpicture}
    
    \begin{groupplot}[group style={group name=fig1,group size=2 by 2, horizontal sep=2cm, vertical sep=1.5cm},        
        width=0.475\textwidth, 
        height=0.395\textwidth,                 
        grid=both, grid style={gray!30},     minor grid style={gray!5},
        tick label style={font=\footnotesize}, 
        ]

    \nextgroupplot[
        xlabel={Iteration}, 
        xmin=1, xmax=200000,  
        ymin=1e-1, ymax=100000,
        ylabel={Bound value for \(L=10^{3}\)},
        ytick distance=10,
        line width=0.7pt,
        xtick align=outside,
        ytick align=outside,
        ymode = log,
        xmode = log,
        ]

    \addplot[mycolor2, line width=1.75pt] table[x=x,y=upper30, col sep=comma]{Data/data.txt};\label{upperlabord}
    \addplot[blue, line width=1.75pt] table[x=x,y=upper32, col sep=comma]{Data/data.txt};\label{upper32}
    \addplot[orange, line width=1.75pt] table[x=x,y=upperlabor, col sep=comma]{Data/data.txt};\label{upper30}
       \nextgroupplot[
        xlabel={Iteration}, 
        xmin=1, xmax=200000,  
        ymin=5e-2, ymax=20000,
        ylabel={Bound value for \(L=10^{2}\)},
        ytick distance=10,
        line width=0.7pt,
        xtick align=outside,
        ytick align=outside,
        ymode = log,
        xmode = log,
        ]
    \addplot[mycolor2, line width=1.75pt] table[x=x,y=upper302, col sep=comma]{Data/data.txt};
    \addplot[blue, line width=1.75pt] table[x=x,y=upper322, col sep=comma]{Data/data.txt};
    \addplot[orange, line width=1.75pt] table[x=x,y=upperlabor2, col sep=comma]{Data/data.txt};
    \end{groupplot}
    
    \node[anchor=north east, draw = black, line width=0.5pt, fill=white, font=\footnotesize,scale=0.7]  (legend) at ([shift={(-0.1cm,-0.1cm)}]fig1 c2r1.north east) {\begin{tabular}{ll}
    Upper-bound (\ref{Theorem7_rate2}) & \ref*{upper30}  \\
    Upper-bound (\ref{conv_rate1_them6}), \(\alpha=3/4\) & \ref*{upper32} \\
    Upper-bound [Laborde et al., [2020]] & \ref*{upperlabord} 
    \end{tabular}};

    \end{tikzpicture}
    \caption{Comparison of our upper bounds with the state-of-the-art.}
    \label{fig1}
\end{figure}
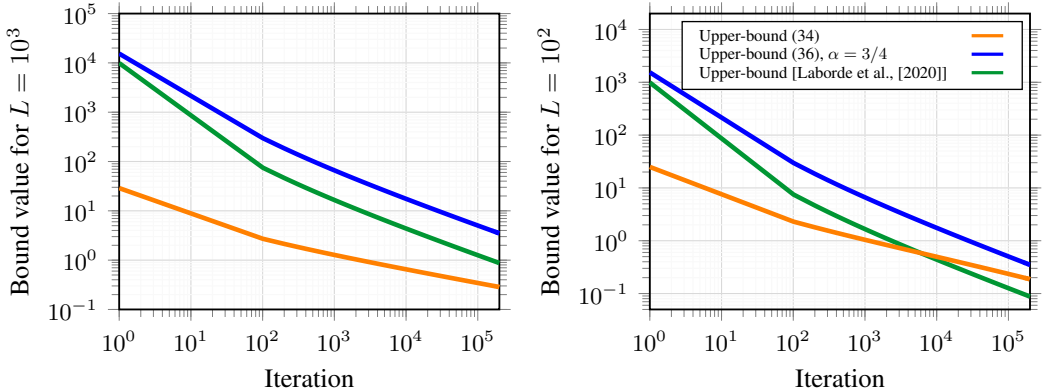
\vspace{0.5em}

\paragraph{Binary Classification.}
For this task we considered \(d=1000\) randomly generated samples of dimension \(n=10\) and labels. The problem can be written as 
\begin{equation}
    \min_{x\in \mathbb{R}^n} \quad \tfrac{1}{d}\sum_{i=1}^d \log(1+e^{-y_i\langle x_i,x \rangle}),
\end{equation}
where \(y_i\) and \(x_i\) denote the \(i\)th label and sample. For comparison, we considered the perturbed gradient descent method with Gaussian noise and decreasing step-size \(s_k=1/(\sqrt{L}k^{2/3})\) (Proposition 3.4 in \citep{pmlr-v108-laborde20a}) together with accelerated noisy gradient descent (Per-FE-C) in \citep{pmlr-v108-laborde20a}. For NNAG we used \(s_k=1/(\sqrt{L}k^{3/4})\) and \(\beta=L/10\). All the perturbation was done using \textit{i.i.d.} Gaussian noise with unit variance, and we conducted 100 Monte-Carlo runs. The results are presented in \Cref{fig2} right panel. As shown, NNAG outperforms all the other methods in this case.

\begin{figure}[!ht]
    \centering

    \begin{tikzpicture}
    
    \begin{groupplot}[group style={group name=fig1,group size=2 by 1, horizontal sep=2cm},        
        width=0.475\textwidth, 
        height=0.395\textwidth,                 
        grid=both, grid style={gray!30},     minor grid style={gray!5},
        tick label style={font=\footnotesize}, 
        ]
    
    \nextgroupplot[
        xlabel={Epoch}, 
        xmin=1, xmax=100,  
        ymin=1e-9, ymax=1,
        ylabel={Logistic loss},
        ytick distance=1e3,
        line width=0.7pt,
        xtick align=outside,
        ytick align=outside,
        ymode = log,
        xmode = linear,
        ]
    \addplot[mycolor2, line width=1.75pt] table[x=xx,y=y1, col sep=comma]{Data/data2.txt};\label{fig1NNAGSVRG}
    \addplot[blue, line width=1.5pt] table[x=xx,y=y2, col sep=comma]{Data/data2.txt};\label{fig1NNAGSGD}
    \addplot[orange, line width=1.75pt] table[x=xx,y=y3, col sep=comma]{Data/data2.txt};\label{fig1SVRG}
    \addplot[purple, line width=1.25pt] table[x=xx,y=y4, col sep=comma]{Data/data2.txt};\label{fig1SGD}
    
    \nextgroupplot[
        xlabel={Epoch}, 
        xmin=1, xmax=100,  
        ymin=4e-4, ymax=1,
        ylabel={Logistic loss},
        ytick distance=10,
        line width=0.7pt,
        xtick align=outside,
        ytick align=outside,
        ymode = log,
        xmode = linear,
        ]

    \addplot[purple, line width=1.75pt] table[x=xx,y=y5, col sep=comma]{Data/data2.txt};\label{pertGD}
    \addplot[orange, line width=1.75pt] table[x=xx,y=y6, col sep=comma]{Data/data2.txt};\label{PerFEC}
    \addplot[blue, line width=1.75pt] table[x=xx,y=y7, col sep=comma]{Data/data2.txt};\label{fig1NNAG}
  \end{groupplot}
  
    \node[anchor=north east, draw = black, line width=0.5pt, fill=white, font=\footnotesize,scale=0.7]  (legend) at ([shift={(-0.1cm,-0.7cm)}]fig1 c2r1.north east) {\begin{tabular}{ll}
    NNAG & \ref*{fig1NNAG}  \\
    Perturbed Gradient Descent & \ref*{pertGD} \\
    Per-FE-C & \ref*{PerFEC}
    \end{tabular}};

 \node[anchor=north east, draw = black, line width=0.5pt, fill=white, font=\footnotesize,scale=0.7]  (legend) at ([shift={(-0.1cm,-2.2cm)}]fig1 c1r1.north east) {\begin{tabular}{ll}
     SGD & \ref*{fig1SGD}  \\
    SVRG & \ref*{fig1SVRG} \\
    NNAG+SVRG & \ref*{fig1NNAGSVRG}\\
    NNAG+SGD & \ref*{fig1NNAGSGD}
    \end{tabular}};
    
    \end{tikzpicture}
    \caption{Comparison of the performance of various methods in binary classification problem.}
    \label{fig2}
\end{figure}
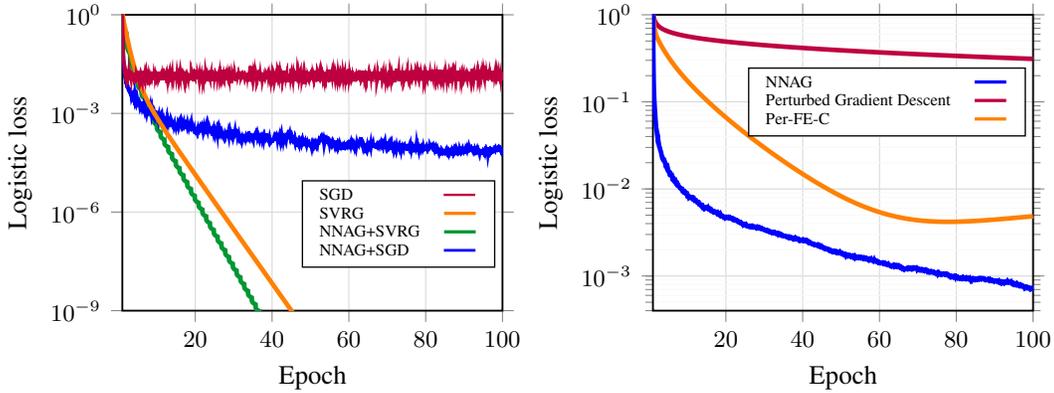

In a related experiment, we combined NNAG with SGD and SVRG. For the SGD-mixing, we replaced the noisy gradients with SGD-like gradients, while for the SVRG-mixing we evaluated all the gradients at the beginning of each epoch (essentially like taking a snapshot in SVRG) and set \(t_k=0\). The step-sizes for SVRG (in accordance with the original implementation \citep{johnson2013accelerating}), SGD, NNAG+SVRG, and NNAG+SGD were set as \(1/(10L)\), \(1/L\), \(c=1/\sqrt{L},\beta=L/10\), and \(c=1/\sqrt{L},\beta=L\), respectively. We conducted 100 Monte-Carlo runs, and the results are displayed in \Cref{fig2} left panel. Notably, when mixed with either SGD or SVRG, NNAG outperforms the original methods. This highlights NNAG's flexibility in terms of gradient noise and demonstrates its potential to accelerate various methods.

\paragraph{Classification on CIFAR10.}
Finally, we tackled the non-convex optimization problem of training a CNN with CIFAR10 dataset \citep{krizhevsky2009learning} %
using the SGD, SVRG, NNAG, and NNAG+SVRG methods. The network consisted of two convolutional layers each followed by max pooling and 3 fully connected linear layers each followed by ReLU activation function. The step-sizes for SGD and SVRG were set as 0.01, and for the NNAG and NNAG+SVRG algorithms we had \(c=0.05,\beta = 150^2\) and \(c=0.001,\beta = 100^2/10\), respectively. The division by 10 is due to step-size division by 10 in the SVRG method. The results for 20 Monte-Carlo simulations are depicted in \Cref{fig3}. Notably, SVRG+NNAG outperforms the other methods in terms of minimizing training error. Additionally, NNAG exhibits slightly better validation accuracy, hinting at its convergence toward a different local solution. 

It is worth noting that faster convergence rates of NNAG+SVRG do not pose problems of overfitting. If necessary, one can terminate the algorithm earlier to achieve optimal performance. In \Cref{fig3}, for instance, NNAG+SVRG reaches its peak "validation accuracy" after approximately 20 epochs, with a validation accuracy of roughly 0.6 and a training error of around 0.66. After this point, a slow overfitting phase begins. Similarly, for SGD and SVRG, their peak "validation accuracy" is achieved after about 50 epochs, with a validation accuracy of approximately 0.6 and a training error of about 0.66, followed by a slow overfitting phase. Finally, NNAG achieves comparable results after approximately 100 epochs.

\begin{figure}[!ht]
    \centering
    \begin{tikzpicture}
    
    \begin{groupplot}[group style={group size=2 by 1, horizontal sep=2cm},        
        width=0.475\textwidth, 
        height=0.395\textwidth,        
        xmin=0, xmax=100,        
        xlabel={Epoch},    
        grid=both, grid style={gray!30},        
        tick label style={font=\footnotesize}, 
        ]

    \nextgroupplot[
        ymin=0, ymax=2.5,
        ylabel={Training Error},
        ytick distance=0.5,
        line width=0.7pt,
        xtick align=outside,
        ytick align=outside,
        ]

    \addplot[red, fill=none, draw=none, name path=sgdlower] table[x=Epoch ,y=Lower1]{Data/save_with_bounds_train_loss.txt};\label{SGD_train_plot_L}
    \addplot[red, fill=none, draw=none, name path=sgdupper] table[x=Epoch ,y=Upper1]{Data/save_with_bounds_train_loss.txt};\label{SGD_train_plot_U}
    \addplot[red, opacity=0.075] fill between[of=sgdlower and sgdupper];
    \addplot[red, line width=1.25pt] table[x=Epoch ,y=SGD]{Data/save_with_bounds_train_loss.txt};\label{SGD_train_plot}

    \addplot[orange, fill=none, draw=none, name path=svrglower] table[x=Epoch ,y=Lower2]{Data/save_with_bounds_train_loss.txt};\label{SVRG_train_plot_L}
    \addplot[orange, fill=none, draw=none, name path=svrgupper] table[x=Epoch ,y=Upper2]{Data/save_with_bounds_train_loss.txt};\label{SVRG_train_plot_U}
    \addplot[orange, opacity=0.075] fill between[of=svrglower and svrgupper];
    \addplot[orange, line width=1.25pt] table[x=Epoch ,y=SVRG]{Data/save_with_bounds_train_loss.txt};\label{SVRG_train_plot}

    \addplot[blue, fill=none, draw=none, name path=nnaglower] table[x=Epoch ,y=Lower3]{Data/save_with_bounds_train_loss.txt};\label{NNAG_train_plot_L}
    \addplot[blue, fill=none, draw=none, name path=nnagupper] table[x=Epoch ,y=Upper3]{Data/save_with_bounds_train_loss.txt};\label{NNAG_train_plot_U}
    \addplot[blue, opacity=0.075] fill between[of=nnaglower and nnagupper];
    \addplot[blue, line width=1.25pt] table[x=Epoch ,y=NNAG]{Data/save_with_bounds_train_loss.txt};\label{NNAG_train_plot}

    \addplot[cyan, fill=none, draw=none, name path=nnagsvrglower] table[x=Epoch ,y=Lower4]{Data/save_with_bounds_train_loss.txt};\label{NNAG_SVRG_train_plot_L}
    \addplot[cyan, fill=none, draw=none, name path=nnagsvrgupper] table[x=Epoch ,y=Upper4]{Data/save_with_bounds_train_loss.txt};\label{NNAG_SVRG_train_plot_U}
    \addplot[cyan, opacity=0.075] fill between[of=nnagsvrglower and nnagsvrgupper];
    \addplot[cyan, line width=1.25pt] table[x=Epoch ,y=NNAG+SVRG]{Data/save_with_bounds_train_loss.txt};\label{NNAG_SVRG_train_plot}
    \nextgroupplot[
        ymin=0, ymax=0.85,    
        ylabel={Validation Accuracy},    
        ytick distance=0.1,
        line width=0.7pt,
        xtick align=outside,
        ytick align=outside,
        name = plot_val_acc,
        ]
        
    \addplot[red, fill=none, draw=none, name path=sgdlower] table[x=Epoch ,y=Lower1]{Data/save_with_bounds_val_acc.txt};\label{SGD_val_plot_L}
    \addplot[red, fill=none, draw=none, name path=sgdupper] table[x=Epoch ,y=Upper1]{Data/save_with_bounds_val_acc.txt};\label{SGD_val_plot_U}
    \addplot[red, opacity=0.1] fill between[of=sgdlower and sgdupper];
    \addplot[red, line width=1.5pt] table[x=Epoch ,y=SGD]{Data/save_with_bounds_val_acc.txt};\label{SGD_val_plot}

    \addplot[orange, fill=none, draw=none, name path=svrglower] table[x=Epoch ,y=Lower2]{Data/save_with_bounds_val_acc.txt};\label{SVRG_val_plot_L}               
    \addplot[orange, fill=none, draw=none, name path=svrgupper] table[x=Epoch ,y=Upper2]{Data/save_with_bounds_val_acc.txt};\label{SVRG_val_plot_U}
    \addplot[orange, opacity=0.075] fill between[of=svrglower and svrgupper];
    \addplot[orange, line width=1.5pt] table[x=Epoch ,y=SVRG]{Data/save_with_bounds_val_acc.txt};\label{SVRG_val_plot}

    \addplot[blue, line width=1.5pt, fill=none, draw=none, name path=nnaglower] table[x=Epoch ,y=Lower3]{Data/save_with_bounds_val_acc.txt};\label{NNAG_val_plot_L}
    \addplot[blue, line width=1.5pt, fill=none, draw=none, name path=nnagupper] table[x=Epoch ,y=Upper3]{Data/save_with_bounds_val_acc.txt};\label{NNAG_val_plot_U}
    \addplot[blue, opacity=0.075] fill between[of=nnaglower and nnagupper];
    \addplot[blue, line width=1.5pt] table[x=Epoch ,y=NNAG]{Data/save_with_bounds_val_acc.txt};\label{NNAG_val_plot}

    \addplot[cyan, line width=1.5pt, fill=none, draw=none, name path=nnagsvrglower] table[x=Epoch ,y=Lower4]{Data/save_with_bounds_val_acc.txt};\label{NNAG_SVRG_val_plot_L}
    \addplot[cyan, line width=1.5pt, fill=none, draw=none, name path=nnagsvrgupper] table[x=Epoch ,y=Upper4]{Data/save_with_bounds_val_acc.txt};\label{NNAG_SVRG_val_plot_U}
    \addplot[cyan, opacity=0.075] fill between[of=nnagsvrglower and nnagsvrgupper];
    \addplot[cyan, line width=1.5pt] table[x=Epoch ,y=NNAG+SVRG]{Data/save_with_bounds_val_acc.txt};\label{NNAG_SVRG_val_plot}
    \end{groupplot}
    
    \node[anchor=south east, draw = black, line width=0.5pt, fill=white, font=\footnotesize]  (legend) at ([shift={(-0.1cm,0.1cm)}]plot_val_acc.south east) {\begin{tabular}{l l}
    SGD & \ref*{SGD_val_plot}  \\
    SVRG & \ref*{SVRG_val_plot} \\
    NNAG & \ref*{NNAG_val_plot} \\
    NNAG+SVRG & \ref*{NNAG_SVRG_val_plot}
    \end{tabular}};

    \end{tikzpicture}
    \caption{Training error and validation accuracy of NNAG, SGD, SVRG, and NNAG+SVRG when used for training a simple CNN on CIFAR10 dataset. Lower and upper confidence bounds with significance level of 0.68 are drawn with similar color to their corresponding line.}
    \label{fig3}
\end{figure}
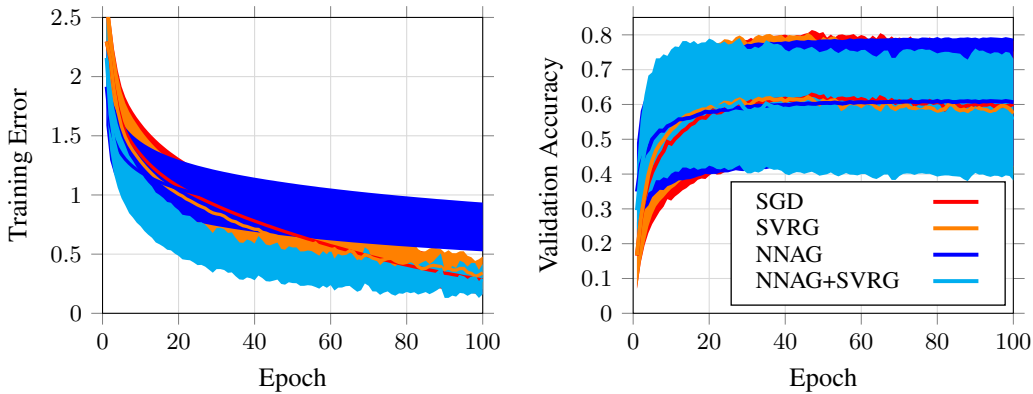

\section{Related Work}\label{sec_relwork}
Polyak's Heavy-Ball (HB) method was one of the first momentum-based methods which could accelerate relative to the gradient descent method \citep{Polyak1963GradientMF}. However, this was not the case for every smooth function \citep{Lessard2016AnalysisAD}. Nesterov modified the HB and introduced the NAG method. This method achieved global convergence with a rate of $\mathcal{O}(1/k^2)$ for smooth convex functions \citep{Nesterov1983AMF}. Nesterov used estimate sequence technique to show the convergence of the NAG method. This technique does not provide immediate insights toward the success of the NAG algorithm in acceleration. Thus, many have tried to understand the essence of acceleration. 

On a similar line of work, \citet{WibisonoE7351} introduced a variational perspective on accelerated methods, leading to a general (non-Euclidean) ODE containing the ODE found by \citet{JMLR:v17:15-084} as a special case. Their approach is based on the choice of a Lagrangian and its corresponding parameters. Since the choice of Lagrangian is not unique, \citet{wilson2021lyapunov} provided a variational perspective on different accelerated first-order methods using another Lagrangian. \citet{7963773} developed a family of accelerated dual algorithms for constrained convex minimization through a similar variational approach. In a more recent development, \citet{zhang2021rethinking} showed that the second-variation also plays an important role in optimality of the ODE found by \citet{JMLR:v17:15-084}. Specifically, they showed that if the time duration is long enough, then the mentioned ODE for the NAG algorithm is the saddle point to the problem of minimizing the action functional. 

The dynamical system perspective on NAG was studied in \citep{muehlebach2019dynamical}. They showed that the NAG algorithm is recovered from the SIE discretization of an ODE. The mentioned ODE was not the result of a vanishing step-size argument. They found that a curvature-dependent damping term accounts for the acceleration phenomenon. Interestingly, \citep{chen2022gradient} also used similar ODE without the SIE discretization. They showed that implicit-velocity is the reason of the acceleration. In a recent analysis, \citep{muehlebach2023accelerated} explores the connections between non-smooth dynamical systems and first-order methods for constrained optimization.

\section{Conclusion}\label{sec_conclusion}
In this work, we tackled the problem of unconstrained smooth convex minimization in Euclidean space. Through a variational analysis of HR-ODEs, we achieved improved convergence rates for NAG in terms of gradient norm. In addition, we showed that NAG can be viewed as an approximation of the rate-matching technique when applied on a specific ODE. Our analysis was then extended to stochastic scenarios. In particular, we proposed a method with both constant and varying step-sizes which performed comparable and sometimes better than state of the art methods.

This work entails multiple future directions. Nesterov's oracle complexity lower bound on gradient norm minimization is \(\mathcal O({k^{-4}})\) \citep{nesterov2003introductory}. It remains an open question to see if the NAG method can achieve this rate of convergence for gradient norm minimization. In this work, we noticed that the HR-ODEs follow the same external force structure. 
In the smooth-strongly convex case, Triple Momentum (TM) method is the fastest known globally convergent method \citep{7967721}. However, the HR-ODE associated with the TM method is not shown to achieve the similar convergence rate as the TM method \citep{DBLP:conf/cdc/SunGK20}. One could use the external force structure proposed here to find a better convergence rate for the HR-ODE associated with the TM algorithm. In addition, our analysis was confined to the Euclidean space. We believe it is possible to explore non-Euclidean forces using a Bregman Lagrangian as in \citep{WibisonoE7351}.
Finally, we blended our noisy stochastic scheme with other known stochastic methods (\textit{e.g.} SGD and SVRG). This technique improved the performance of those methods. As a future work, one can apply the same technique to other practical methods like ADAM, RMSprop, etc, and study the behavior of the final algorithm.

\newpage

\appendix

\section{Appendix}
\subsection{Proof of \Cref{Theorem_ODE_laborde}}\label{thm1_proof}
Consider the Lyapunov function 
\begin{align}\label{lyap_theorem_C}
    \varepsilon(t)=\frac{1}{2}\|X_t+e^{-\alpha_t}\dot X_t-x^*+\sqrt{s}e^{-\alpha_t}\nabla f(X_t)\|^2+e^{\beta_t}(f(X_t)-f(x^*)).
\end{align}
Taking derivative with respect to $t$ gives
\begin{align}\label{prf_thm1_eqn1}
    \frac{d \varepsilon}{dt}=&\langle \frac{d}{dt}(X_t+e^{-\alpha_t}\dot X_t-x^*+\sqrt{s}e^{-\alpha_t}\nabla f(X_t)),X_t+e^{-\alpha_t}\dot X_t-x^*+\sqrt{s}e^{-\alpha_t}\nabla f(X_t)\rangle\nonumber\\
    & +\dot \beta_t e^{\beta_t}(f(X_t)-f(x^*))+e^{\beta_t}\langle \nabla f(X_t), \dot X_t\rangle.
\end{align}
Note that (\ref{HR_general_ODE_F_laborde}) can be represented as
\begin{align}\label{HR_general_der_format_laborde}
    \frac{d}{dt}\left[X_t+e^{-\alpha_t}\dot X_t+\sqrt{s}e^{-\alpha_t}\nabla f(X_t)\right]=-e^{\alpha_t+\beta_t}\nabla f(X_t).
\end{align}
Using (\ref{HR_general_der_format_laborde}) in (\ref{prf_thm1_eqn1}) we have
\begin{align}
     \frac{d \varepsilon}{dt}=& \langle -e^{\alpha_t+\beta_t}\nabla f(X_t),X_t+e^{-\alpha_t}\dot X_t-x^*+\sqrt{s}e^{-\alpha_t}\nabla f(X_t) \rangle \nonumber \\
     &+\dot \beta_t e^{\beta_t}(f(X_t)-f(x^*))+e^{\beta_t}\dot X_t\nabla f(X_t)\nonumber\\
     &= -e^{\alpha_t+\beta_t}\langle\nabla f(X_t),X_t-x^*\rangle -e^{\beta_t}\langle \nabla f(X_t),\dot X_t\rangle -\sqrt{s}e^{\beta_t}\|\nabla f(X_t)\|^2\nonumber\\
     &+\dot \beta_te^{\beta_t}(f(X_t)-f(x^*))+e^{\beta_t}\langle \nabla f(X_t),\dot X_t \rangle\nonumber\\
     & \overset{\text{(convexity)}}{\leq} -e^{\alpha_t+\beta_t}(f(X_t)-f(x^*))+\dot \beta_te^{\beta_t}(f(X_t)-f(x^*))\nonumber\\
     & = -e^{\beta_t}\left[(e^{\alpha_t}-\dot \beta_t)(f(X_t)-f(x^*))\right].\nonumber
\end{align}
Utilizing the ideal scaling condition $\dot \beta_t\leq e^{\alpha_t}$ we have
\begin{align}
     \frac{d \varepsilon}{dt}\leq 0.\nonumber
\end{align}
Thus, for the initialization point $t_0$ we have
$$e^{\beta_t}(f(X_t)-f(x^*))\leq \varepsilon(t)\leq \varepsilon(t_0),$$
and the proof is complete.
\subsection{Proof of \Cref{Theorem_ODE_Shi}}\label{thm3_proof}
Consider the Lyapunov function 
\begin{align}\label{lyap_theorem3}
    \varepsilon(t)=\frac{1}{2}\|X_t+e^{-\alpha_t}\dot X_t-x^*+\sqrt{s}e^{-\alpha_t}\nabla f(X_t)\|^2+(e^{\beta_t}+\sqrt{s}e^{-2\alpha_t}\dot \beta_t)(f(X_t)-f(x^*)).
\end{align}
Taking derivative with respect to $t$ gives
\begin{align}\label{prf_thm3_eqn1}
    \frac{d \varepsilon}{dt}=&\langle \frac{d}{dt}(X_t+e^{-\alpha_t}\dot X_t-x^*+\sqrt{s}e^{-\alpha_t}\nabla f(X_t)),X_t+e^{-\alpha_t}\dot X_t-x^*+\sqrt{s}e^{-\alpha_t}\nabla f(X_t)\rangle\nonumber\\
    & +(\dot \beta_te^{\beta_t}-\sqrt{s}(2\dot \alpha_t)e^{-2\alpha_t}\dot \beta_t +\sqrt{s}e^{-2\alpha_t}\Ddot{\beta_t})(f(X_t)-f(x^*))\nonumber\\
    &+(e^{\beta_t}+\sqrt{s}e^{-2\alpha_t}\dot \beta_t)\dot X_t\nabla f(X_t).
\end{align}
Note that (\ref{HR_general_ODE_F_laborde}) can be represented as
\begin{align}\label{HR_general_der_format_Shi}
    \frac{d}{dt}\left[X_t+e^{-\alpha_t}\dot X_t+\sqrt{s}e^{-\alpha_t}\nabla f(X_t)\right]=-\left(e^{\alpha_t+\beta_t}+\sqrt{s}e^{-\alpha_t}\dot \beta_t \right)\nabla f(X_t).
\end{align}
Using (\ref{HR_general_der_format_Shi}) in (\ref{prf_thm3_eqn1}) we have
\begin{align}
     \frac{d \varepsilon}{dt}=& \langle -\left(e^{\alpha_t+\beta_t}+\sqrt{s}e^{-\alpha_t}\dot \beta_t \right)\nabla f(X_t),X_t+e^{-\alpha_t}\dot X_t-x^*+\sqrt{s}e^{-\alpha_t}\nabla f(X_t) \rangle \nonumber \\
     &+(\dot \beta_te^{\beta_t}-\sqrt{s}(2\dot \alpha_t)e^{-2\alpha_t}\dot \beta_t +\sqrt{s}e^{-2\alpha_t}\Ddot{\beta_t})(f(X_t)-f(x^*))\nonumber\\
    &+(e^{\beta_t}+\sqrt{s}e^{-2\alpha_t}\dot \beta_t)\langle\nabla f(X_t),\dot X_t\rangle,\nonumber\\
     &= -\left(e^{\alpha_t+\beta_t}+\sqrt{s}e^{-\alpha_t}\dot \beta_t \right)\langle\nabla f(X_t),X_t-x^*\rangle -\left(e^{\beta_t}+\sqrt{s}e^{-2\alpha_t}\dot \beta_t \right)\langle \nabla f(X_t),\dot X_t\rangle \nonumber\\
     &-\sqrt{s}\left(e^{\beta_t}+\sqrt{s}e^{-2\alpha_t}\dot \beta_t \right)\|\nabla f(X_t)\|^2\nonumber\\
     &+(\dot \beta_te^{\beta_t}-\sqrt{s}(2\dot \alpha_t)e^{-2\alpha_t}\dot \beta_t +\sqrt{s}e^{-2\alpha_t}\Ddot{\beta_t})(f(X_t)-f(x^*))\nonumber\\
    &+(e^{\beta_t}+\sqrt{s}e^{-2\alpha_t}\dot \beta_t)\langle\nabla f(X_t),\dot X_t\rangle,\nonumber\\
     & \overset{\text{(convexity)}}{\leq} -\left(e^{\alpha_t+\beta_t}+\sqrt{s}e^{-\alpha_t}\dot \beta_t \right)(f(X_t)-f(x^*))\nonumber\\
     &+(\dot \beta_te^{\beta_t}-\sqrt{s}(2\dot \alpha_t)e^{-2\alpha_t}\dot \beta_t +\sqrt{s}e^{-2\alpha_t}\Ddot{\beta_t})(f(X_t)-f(x^*)),\nonumber\\
     & = -\left[e^{\beta_t}(e^{\alpha_t}-\dot \beta_t)+\sqrt{s}e^{-\alpha_t}(\dot \beta_t+2\dot \alpha_t e^{-\alpha_t}\dot \beta_t-e^{-\alpha_t}\Ddot{\beta}_t)\right](f(X_t)-f(x^*)).\nonumber
\end{align}
Utilizing the modified ideal scaling conditions $\dot \beta_t\leq e^{\alpha_t}$ and $\Ddot{\beta}_t\leq e^{\alpha_t}\dot \beta_t + 2\dot \alpha_t \dot \beta_t$ we have
\begin{align}
     \frac{d \varepsilon}{dt}\leq 0.\nonumber
\end{align}
Thus, for the initialization point $t_0$ we have
$$(e^{\beta_t}+\sqrt{s}e^{-2\alpha_t}\dot \beta_t)(f(X_t)-f(x^*))\leq \varepsilon(t)\leq \varepsilon(t_0),$$
and the proof is complete.
\subsection{Proof of \Cref{Theorem3_1}}\label{thm4_proof}
Consider the Lyapunov function
    \begin{align}\label{thm32_eqn_1}
        \varepsilon(t) = e^{\beta_t}\left(\frac{\mu}{2}\|X_t-x^*+e^{-\alpha_t}\dot X_t+\frac{\sqrt{s}e^{\alpha_t}}{\mu}\nabla f(X_t)\|^2+f(X_t)-f(x^*)\right).
    \end{align}
    Taking derivative w.r.t. time gives
    \begin{align}\label{thm32_eqn_2}
       \frac{d\varepsilon(t)}{dt}&=\dot \beta e^{\beta_t}\left(\frac{\mu}{2}\|X_t-x^*+e^{-\alpha_t}\dot X_t+\frac{\sqrt{s}e^{\alpha_t}}{\mu}\nabla f(X_t)\|^2+f(X_t)-f(x^*)\right)\nonumber\\
       & + \mu e^{\beta_t}\left\langle \dot X_t-\dot \alpha_t e^{-\alpha_t}\dot X_t + e^{-\alpha_t}\Ddot{X}_t +\frac{\sqrt{s}}{\mu}\dot \alpha_te^{\alpha_t}\nabla f(X_t)+\frac{\sqrt{s}}{\mu}e^{\alpha_t}\nabla ^2f(X_t)\dot X_t\right.\nonumber\\
       &\left. ,X_t-x^*+e^{-\alpha_t}\dot X_t+\frac{\sqrt{s}e^{\alpha_t}}{\mu}\nabla f(X_t)\right\rangle+e^{\beta_t}\langle \nabla f(X_t),\dot X_t \rangle.
    \end{align}
    Next, we will use (\ref{sc_eqn1}) in (\ref{thm32_eqn_2})
    \begin{align}\label{thm32_eqn_3}
        \frac{d\varepsilon(t)}{dt}&=\dot \beta e^{\beta_t}\left(\frac{\mu}{2}\|X_t-x^*+e^{-\alpha_t}\dot X_t+\frac{\sqrt{s}e^{\alpha_t}}{\mu}\nabla f(X_t)\|^2+f(X_t)-f(x^*)\right)\nonumber\\
       & + \mu e^{\beta_t}\left\langle \dot X_t-e^{-\alpha_t}(\dot \gamma_t+\dot \beta_t)\dot X_t  +\frac{e^{\alpha_t }}{\mu}(\sqrt{s}(\dot \alpha_t-\dot \beta_t)-1)\nabla f(X_t)\right.\nonumber\\
       &\left. ,X-x^*+e^{-\alpha_t}\dot X_t+\frac{\sqrt{s}e^{\alpha_t}}{\mu}\nabla f(X_t)\right\rangle+e^{\beta_t}\langle \nabla f(X_t),\dot X_t \rangle,\nonumber\\
   &= \dot \beta_t e^{\beta_t}\left(\frac{\mu}{2}\left[\|X_t-x^*\|^2+e^{-2\alpha_t}\|\dot X_t\|^2+\|\frac{\sqrt{s}e^{\alpha_t}}{\mu}\nabla f(X_t)\|^2+2e^{-\alpha_t}\langle X_t-x^*,\dot X_t\rangle\right.\right.\nonumber\\
   &\left.\left.+\frac{2\sqrt{s}e^{\alpha_t}}{\mu}\langle X_t-x^*,\nabla f(X_t) \rangle+ \frac{2\sqrt{s}}{\mu}\langle \nabla f(X_t),\dot X_t \rangle\right]\right)+\dot \beta_t e^{\beta_t} (f(X_t)-f(x^*) )\nonumber\\
   & +\mu e^{\beta_t}\left[ (1-e^{-\alpha_t}(\dot \gamma_t+\dot \beta_t))\langle \dot X_t,X_t-x^*\rangle+ e^{-\alpha_t}(1-e^{-\alpha_t}(\dot \gamma_t+\dot \beta_t))\|\dot X_t\|^2 \right.\nonumber\\
   & +\frac{\sqrt{s}e^{\alpha_t}}{\mu}(1-e^{-\alpha_t}(\dot \gamma_t+\dot \beta_t))\langle \dot X_t,\nabla f(X_t) \rangle + \frac{e^{\alpha_t }}{\mu}(\sqrt{s}(\dot \alpha_t-\dot \beta_t)-1)\langle \nabla f(X_t) ,X_t-x^*\rangle\nonumber\\
   & \left.+ \frac{(\sqrt{s}(\dot \alpha_t-\dot \beta_t)-1)}{\mu}\langle \nabla f(X_t),\dot X_t \rangle+\frac{\sqrt{s}e^{2\alpha_t}}{\mu^2}(\sqrt{s}(\dot \alpha_t-\dot \beta_t)-1)\|\nabla f(X_t)\|^2\right]\nonumber\\
   & + e^{\beta_t}\langle \nabla f(X_t),\dot X_t \rangle.
    \end{align}
    Now, using strong convexity of $f$ and applying $ \alpha_t=\alpha$, $\dot \beta\geq 0$, $\dot \gamma_t=e^{\alpha_t}$, and $\dot \beta_t\leq e^{\alpha_t}$ gives
    \begin{align}\label{thm32_eqn_4}
        \frac{d\varepsilon(t)}{dt}&\leq-\dot \beta_t e^{\beta_t}\|\sqrt{\frac{\mu}{2}}e^{-\alpha_t}\dot X_t\|^2-\sqrt{s}e^{\beta_t}\dot \beta_t\langle \dot X_t,\nabla f(X_t) \rangle-\dot \beta_t e^{\beta_t}\|\frac{\sqrt{s}e^{\alpha_t}}{\sqrt{2\mu}}\nabla f(X_t)\|^2\nonumber\\
        &=-\dot \beta_t e^{\beta_t} \|    \sqrt{\frac{\mu}{2}}e^{-\alpha_t}\dot X_t+  \frac{\sqrt{s}e^{\alpha_t}}{\sqrt{2\mu}}\nabla f(X_t)\|^2\leq 0,
    \end{align}
    and therefore, 
    $$e^{\beta_t}(f(X_t)-f(x^*))\leq \varepsilon(t)\leq \varepsilon(0)$$
    and the proof is complete.
\subsection{Proof of \Cref{theorem4}}\label{thm5_proof}
    Take the Lyapunov function 
\begin{align}\label{Lyapunov_function}
    \varepsilon(k) = \frac{s(k+2)k}{4}(f(x_k)-f(x^*))+\frac{1}{2}\|x_{k+1}-x^*+\frac{k}{2}(x_{k+1}-x_k)+\frac{ks}{2}\nabla f(x_k)\|^2.
\end{align}
The choice of Lyapunov function is the same as \citep{Shi2021UnderstandingTA}. Note that the second term is equivalent to $\frac{1}{2}\|v_k-x^*\|^2$ through the first line of the update rule (\ref{new_algorithm}). Next, we will show that 
\begin{align}\label{Lyap_1}
    \varepsilon(k+1)-\varepsilon(k)\leq -\frac{s^2k(k+2)}{8}\|\nabla f(x_k)\|^2.
\end{align}
Using (\ref{Lyapunov_function}) we have
\begin{align}\label{Lyap_2}
    \varepsilon(k+1)-\varepsilon(k)&=\frac{s(k+3)(k+1)}{4}(f(x_{k+1})-f(x^*))+\frac{1}{2}\|v_{k+1}-x^*\|^2\nonumber\\
    & - \frac{s(k+2)(k)}{4}(f(x_{k})-f(x^*))-\frac{1}{2}\|v_{k}-x^*\|^2\nonumber\\
    &=\frac{s(k+2)k}{4}(f(x_{k+1})-f(x_k))+\frac{s(2k+3)}{4}(f(x_{k+1})-f(x^*))\nonumber\\
    &+\frac{1}{2}(2\langle v_{k+1}-v_k,v_k-x^* \rangle+\|v_{k+1}-v_k\|^2)\nonumber\\
    &= \frac{s(k+2)k}{4}(f(x_{k+1})-f(x_k))+\frac{s(2k+3)}{4}(f(x_{k+1})-f(x^*))\nonumber\\
    &+\frac{1}{2}(2\langle -s(\frac{k+2}{2})\nabla f(x_{k+1}),x_{k+1}-x^*+\frac{k}{2}(x_{k+1}-x_k)+\frac{ks}{2}\nabla f(x_k) \rangle \nonumber\\
    &+\|s(\frac{k+2}{2})\nabla f(x_{k+1})\|^2)\nonumber\\
    &= \frac{s(k+2)k}{4}(f(x_{k+1})-f(x_k))+\frac{s(2k+3)}{4}(f(x_{k+1})-f(x^*))\nonumber\\
    &-s(\frac{k+2}{2})\langle \nabla f(x_{k+1}),x_{k+1}-x^*\rangle-s\frac{k(k+2)}{4}\langle \nabla f(x_{k+1}),x_{k+1}-x_k\rangle\nonumber\\
    & -s^2(\frac{k(k+2)}{4})\langle \nabla f(x_{k+1}),\nabla f(x_k)\rangle+ \frac{(s(k+2))^2}{8}\|\nabla f(x_{k+1})\|^2
\end{align}
Now, from convexity and smoothness of the function $f$ we have
\begin{align}\label{smooth_convex}
    f(x_{k+1})-f(x_k)\leq \langle \nabla f(x_{k+1}),x_{k+1}-x_k\rangle -\frac{1}{2L}\|\nabla f(x_{k+1})-\nabla f(x_k)\|^2.
\end{align}
Applying (\ref{smooth_convex}) in (\ref{Lyap_2}) we get
\begin{align}\label{Lyap_3}
     \varepsilon(k+1)-\varepsilon(k)&\leq \frac{s(k+2)k}{4}\left[ \langle \nabla f(x_{k+1}),x_{k+1}-x_k\rangle-\frac{1}{2L}\|\nabla f(x_{k+1})-\nabla f(x_k)\|^2 \right]\nonumber\\
     & +\frac{s(2k+3)}{4} \left[ \langle \nabla f(x_{k+1}),x_{k+1}-x^*\rangle-\frac{1}{2L}\|\nabla f(x_{k+1})\|^2 \right]\nonumber\\
     & -s(\frac{k(k+2)}{4})\langle \nabla f(x_{k+1}),x_{k+1}-x_k \rangle-s(\frac{k+2}{2})\langle \nabla f(x_{k+1}),x_{k+1}-x^* \rangle\nonumber\\
     & -s^2(\frac{k(k+2)}{4})\langle \nabla f(x_{k+1}),\nabla f(x_k)\rangle+ \frac{(s(k+2))^2}{8}\|\nabla f(x_{k+1})\|^2\nonumber\\
     &\leq -\frac{s(k+2)k}{8L}\|\nabla f(x_{k+1})-\nabla f(x_k)\|^2-\frac{s(2k+4)}{8L}\|\nabla f(x_{k+1})\|^2 \nonumber\\
     & -2s^2(\frac{k(k+2)}{8})\langle \nabla f(x_{k+1}),\nabla f(x_k)\rangle+\frac{s^2k(k+2)}{8}\|\nabla f(x_{k+1})\|^2\nonumber\\
     &+\frac{s^2(k+2)}{4}\|\nabla f(x_{k+1})\|^2\nonumber\\
     &=  -\frac{s(k+2)k}{8}(\frac{1}{L}-s)\|\nabla f(x_{k+1})\|^2-\frac{s(k+2)}{4}(\frac{1}{L}-s)\|\nabla f(x_{k+1})\|^2\nonumber\\
     &+2s(\frac{k(k+2)}{8})(\frac{1}{L}-s)\langle \nabla f(x_{k+1}),\nabla f(x_k)\rangle\nonumber\\
     &-\frac{s(k+2)k}{8}(\frac{1}{L}-s)\|\nabla f(x_{k})\|^2-s^2(\frac{k(k+2)}{8})\|\nabla f(x_{k})\|^2\nonumber\\
     &= -\frac{s(k+2)k}{8}(\frac{1}{L}-s)\|\nabla f(x_{k+1})-\nabla f(x_k)\|^2-\frac{s(k+2)}{4}(\frac{1}{L}-s)\|\nabla f(x_{k+1})\|^2\nonumber\\
     &-s^2(\frac{k(k+2)}{8})\|\nabla f(x_{k})\|^2\leq -s^2(\frac{k(k+2)}{8})\|\nabla f(x_{k})\|^2,
\end{align}
where in the second inequality we used $-\langle \nabla f(x_{k+1}),x_{k+1}-x^* \rangle\leq -\frac{1}{2L}\|\nabla f(x_{k+1})\|^2$ and the last inequality holds as long as $s\leq 1/L$.\par
With (\ref{Lyap_1}) at hand, we can make sum both sides from $i=0$ till $i=k-1$ and get
\begin{align}\label{Lyap_4}
        \varepsilon(k)-\varepsilon(0)&\leq -\frac{s^2}{8}\sum_{i=0}^{k} i(i+2)\|\nabla f(x_i)\|^2\nonumber\\
        & \leq -\frac{s^2}{8}\min_{0\leq i\leq k}\|\nabla f(x_i)\|^2\sum_{i=0}^{k} i(i+2)\nonumber\\
        &=-\frac{s^2}{8}\min_{0\leq i\leq k}\|\nabla f(x_i)\|^2\sum_{i=1}^{k} i(i+2)\nonumber\\
        &=-\frac{s^2}{8}\min_{0\leq i\leq k}\|\nabla f(x_i)\|^2\left[ \frac{k(k+1)(2k+1)}{6}+k(k+1) \right]\nonumber\\
        &\leq -\frac{s^2}{8}\min_{0\leq i\leq k}\|\nabla f(x_i)\|^2\left[ \frac{k(k+1)(2k+1)}{6}\right]\nonumber\\
        &\leq -\frac{k^3s^2}{24}\min_{0\leq i\leq k}\|\nabla f(x_i)\|^2.
\end{align}
Note that $\varepsilon(k)\geq 0$. Therefore, we have
\begin{align}\label{Lyap_5}
   & -\varepsilon(0) \leq -\frac{k^3s^2}{24}\min_{0\leq i\leq k}\|\nabla f(x_i)\|^2\nonumber\\
    \rightarrow & \varepsilon(0)\geq \frac{k^3s^2}{24}\min_{0\leq i\leq k}\|\nabla f(x_i)\|^2.
\end{align}
Next, not ethat for $k=0$, Lyapunov function (\ref{Lyapunov_function}) is equivalent to $1/2\|v_0-x^*\|^2$. With initialization $v_0=x_0$ we get
\begin{align}\label{Lyap_6}
    \frac{1}{2}\|x_0-x^*\|^2&\geq \frac{k^3s^2}{24}\min_{0\leq i\leq k}\|\nabla f(x_i)\|^2,
\end{align}
and therefore,
\begin{align}\label{Lyap_7}
    \min_{0\leq i\leq k}\|\nabla f(x_i)\|^2 \leq \frac{12}{k^3s^2}\|x_0-x^*\|^2,
\end{align}
for $0< s\leq 1/L$ and $k\geq 1$. 
Also, from (\ref{Lyap_4}) we have $\varepsilon(k)\leq \varepsilon(0) = 1/2\|v_0-x^*\|^2$. Thus,
\begin{align}\label{Lyap_8}
    f(x_k)-f(x^*)\leq \frac{2}{sk(k+2)}\|x_0-x^*\|^2,
\end{align}
since $x_0=v_0$. This completes the proof.

\subsection{Proof of Proposition \ref{prop1}}\label{proof_prop1}
    From the update rule (\ref{rate_match_2}) we have
    \begin{align}\label{prop1_eqn1}
        z_k = x_{k+1}+\frac{k}{2}(x_{k+1}-y_k)=x_{k+1}+\frac{k}{2}(x_{k+1}-x_k+s \nabla f(x_k)).
    \end{align}
    Replacing (\ref{prop1_eqn1}) in the update rule of $z_k$ in (\ref{rate_match_2}), we get
    \begin{align}\label{prop1_eqn2}
        x_{k+1}-x_k+\frac{k}{2}(x_{k+1}-x_k+s\nabla f(x_k))-\frac{k-1}{2}(x_k-x_{k-1}+s \nabla f(x_{k-1}))=-\frac{sk}{2}\nabla f(y_k).
    \end{align}
    By rearranging we have
      \begin{align}\label{prop1_eqn3}
        x_{k+1}-x_k&+\frac{1}{2}(x_{k}-x_{k-1})+\frac{s}{2}\nabla f(x_{k-1})+\frac{k}{2}(x_{k+1}+x_{k-1}-2x_k)\nonumber\\
        &+\frac{ks}{2}(\nabla f(x_{k})- \nabla f(x_{k-1}))=-\frac{s k}{2}\nabla f(y_k),\nonumber\\
        \rightarrow \frac{2}{k\sqrt{s}}(\frac{x_{k+1}-x_k}{\sqrt{s}})&+\frac{1}{k\sqrt{s}}(\frac{x_k-x_{k-1}}{\sqrt{s}}) +\frac{1}{k}\nabla f(x_{k-1}) \nonumber\\
        & +\frac{x_{k+1}-2x_k+x_{k-1}}{s}+\nabla f(x_k)-\nabla f(x_{k-1})=-\nabla f(y_k).
    \end{align}
    Using approximations
    \begin{align}\label{prop1_eqn4}
       \frac{2}{k\sqrt{s}}(\frac{x_{k+1}-x_k}{\sqrt{s}})+\frac{1}{k\sqrt{s}}(\frac{x_k-x_{k-1}}{\sqrt{s}})&\approx \frac{1}{t_k}(2 \dot X(t_k))\\
        \frac{x_{k+1}-2x_k+x_{k-1}}{s}&\approx \Ddot{X}(t_k)\\
        \nabla f(x_k)-\nabla f(x_{k-1})&\approx \sqrt{s}\nabla^2 f(X(t_k))\dot X(t_k)\\
        \nabla f(y_k)=\nabla f(x_k-s\nabla f(x_k))&\approx \nabla f(x_k)-s\nabla^2 f(x_k)\nabla f(x_k)\approx \nabla f(x_k)\\
        X(t)\approx X(t_k)\qquad \dot X(t)&\approx\dot X(t_k)\qquad \Ddot X(t)\approx\Ddot X(t_k) \qquad Y(t_k)\approx Y(t)
    \end{align}
    and $t_k=k\sqrt{s}$ in (\ref{prop1_eqn3}), we get
\begin{align}\label{prop1_eqn5}
    \Ddot{X}(t)+(\frac{3}{t}+\sqrt{s}\nabla^2 f(X(t)))\dot X(t)+(1+\frac{\sqrt{s}}{t})\nabla f(X(t))=0.
\end{align}

This, concludes the proof.
\subsection{Proof of \Cref{Theorem6}}\label{thm7_proof}
Take the Lyapunov function
    \begin{align}\label{Lyap_stochastic2}
    \varepsilon(k)= (\frac{t_k^2}{4}+\frac{t_k\beta}{2\sqrt{L}})(f(x_k)-f(x^*))+\frac{1}{2}\|v_k-x^*\|^2.
    \end{align}
     Using (\ref{Lyap_stochastic2}) we have
    \begin{align}\label{Lyap2_stc_1}
        \varepsilon(k+1)-\varepsilon(k)&=(\frac{t_{k+1}^2}{4}+\frac{\beta t_{k+1}}{2\sqrt{L}})(f(x_{k+1})-f(x^*))\nonumber\\
        &-(\frac{t_{k}^2}{4}+\frac{\beta t_{k}}{2\sqrt{L}})(f(x_{k})-f(x^*))+\frac{1}{2}(\|v_{k+1}-x^*\|^2-\|v_{k}-x^*\|^2),\nonumber\\
        & = (\frac{t_{k+1}^2-t_k^2}{4}+\frac{\beta t_{k+1}-\beta t_k}{2\sqrt{L}})(f(x_{k+1})-f(x^*))\nonumber\\
        &+(\frac{t_{k}^2}{4}+\frac{\beta t_{k}}{2\sqrt{L}})(f(x_{k+1})-f(x_k))+\frac{1}{2}(\|v_{k+1}-x^*\|^2-\|v_{k}-x^*\|^2),\nonumber\\
        &= (\frac{t_{k+1}^2-t_k^2}{4}+\frac{\beta(t_{k+1}-t_k)}{2\sqrt{L}})(f(x_{k+1})-f(x^*))\nonumber\\
        &+(\frac{t_{k}^2}{4}+\frac{\beta t_{k}}{2\sqrt{L}})(f(x_{k+1})-f(x_k))+\frac{1}{2}(\|v_{k+1}-v_k\|^2+2\langle v_{k+1}-v_k,v_k-x^*\rangle),
    \end{align}
    where in the last equality we used 
    $$\langle a-b,a-c\rangle = \frac{1}{2}(\|a-b\|^2+\|a-c\|^2-\|b-c\|^2).$$
    Next, from the update (\ref{new_alg_mixed_step}) we have
    \begin{align}\label{Lyap2_stc_2}
        \left\{\begin{array}{cl}
             v_k-x^*=&\frac{t_k}{2s_k}(x_{k+1}-x_k)+x_{k+1}-x^*+\frac{t_k\beta }{2\sqrt{L}}(\nabla f(x_k)+e_k),  \\
            v_{k+1}-v_k=& -\frac{1}{2}(t_ks_k+\tfrac{2s_k\beta}{\sqrt{L}}) (\nabla f(x_{k+1})+e_{k+1}).
        \end{array}\right.
    \end{align}
    Using (\ref{Lyap2_stc_2}) in (\ref{Lyap2_stc_1}) we have
    \begin{align}\label{Lyap2_stc_3}
        \varepsilon(k+1)-\varepsilon(k)&=(\frac{t_{k+1}^2-t_k^2}{4}+\frac{\beta(t_{k+1}-t_k)}{2\sqrt{L}})(f(x_{k+1})-f(x^*))\nonumber\\
        &+(\frac{t_{k}^2}{4}+\frac{\beta t_{k}}{2\sqrt{L}})(f(x_{k+1})-f(x_k))+\frac{1}{8}((t_k+\frac{2\beta}{\sqrt{L}})s_k)^2\|\nabla f(x_{k+1})+e_{k+1}\|^2\nonumber\\
        &-\frac{1}{2}\langle (t_k+\frac{2\beta}{\sqrt{L}})s_k (\nabla f(x_{k+1})+e_{k+1}), \frac{t_k}{2s_k}(x_{k+1}-x_k)+x_{k+1}-x^*\nonumber\\
        &+\frac{t_k\beta}{2\sqrt{L}}(\nabla f(x_k)+e_k)\rangle.
    \end{align}
    Now, using (\ref{smooth_convex}) in (\ref{Lyap2_stc_3}) we get
    \begin{align}\label{Lyap2_stc_4}
         \varepsilon(k+1)-\varepsilon(k)&\leq (\frac{t_{k+1}^2-t_k^2}{4}+\frac{\beta(t_{k+1}-t_k)}{2\sqrt{L}})(\langle \nabla f(x_{k+1}),x_{k+1}-x^* \rangle-\frac{1}{2L}\|\nabla f(x_{k+1})\|^2)\nonumber\\
         & +(\frac{t_{k}^2}{4}+\frac{\beta t_{k}}{2\sqrt{L}})(\langle \nabla f(x_{k+1}),x_{k+1}-x_k \rangle-\frac{1}{2L}\|\nabla f(x_{k+1})-\nabla f(x_k)\|^2)\nonumber\\
         & +\frac{1}{8}((t_k+\frac{2\beta}{\sqrt{L}})s_k)^2(\|\nabla f(x_{k+1})\|^2+\|e_{k+1}\|^2+2\langle \nabla f(x_{k+1}) ,e_k \rangle) \nonumber\\
         & -(\frac{t_k^2}{4}+\frac{\beta t_k}{2\sqrt{L}})\langle \nabla f(x_{k+1})+e_{k+1},x_{k+1}-x_k\rangle\nonumber\\
         &-\frac{(t_k+\tfrac{2\beta}{\sqrt{L}})s_k}{2}\langle \nabla f(x_{k+1})+e_{k+1},x_{k+1}-x^*\rangle\nonumber\\
         & -\frac{\beta t_k(t_k+\frac{2\beta}{\sqrt{L}})s_k}{4\sqrt{L}}\langle \nabla f(x_{k+1})+e_{k+1}, \nabla f(x_k)+e_k \rangle.
         \end{align}
          Due to $t_k=\sum_{i=1}^k s_k$, we get $t_{k+1}^2=t_k^2 + 2t_ks_{k+1}+ s_{k+1}^2$ and $t_{k+1}=t_k+s_{k+1}$. Also, note  that by definition $s_{k+1}\leq s{k}$ as long as $0<\alpha<1$. Thus,
         $$\left(\frac{t_{k+1}^2-t_k^2}{4}+\frac{\beta (t_{k+1}-t_k)}{2\sqrt{L}} -\frac{(t_k+2\frac{\beta}{\sqrt{L}})s_k}{2}\right)\leq 0,$$
         for $\beta \geq 1/(2k^\alpha)$.
         Then, due to convexity and smoothness of $f$ we get
         \begin{align}\label{Lyap2_stc_5}
             \left(\frac{t_{k+1}^2-t_k^2}{4}\right. & \left. +\frac{\beta (t_{k+1}-t_k)}{2\sqrt{L}} -\frac{(t_k+2\frac{\beta}{\sqrt{L}})s_k}{2}\right)\langle \nabla f(x_{k+1}), x_{k+1}-x^* \rangle\nonumber\\
             &\leq \frac{\left(\frac{t_{k+1}^2-t_k^2}{4}+\frac{\beta (t_{k+1}-t_k)}{2\sqrt{L}} -\frac{(t_k+2\frac{\beta}{\sqrt{L}})s_k}{2}\right)}{2L}\|\nabla f(x_{k+1})\|^2.
         \end{align} 
        Replacing (\ref{Lyap2_stc_5}) in (\ref{Lyap2_stc_4}) and simplification gives
        \begin{align}\label{Lyap2_stc_6}
            \varepsilon(k+1)-\varepsilon(k)&\leq -\frac{(t_k+2\frac{\beta}{\sqrt{L}})s_k}{4L} \|\nabla f(x_{k+1})\|^2-\frac{(\frac{t_{k}^2}{4}+\frac{\beta t_{k}}{2\sqrt{L}})}{2L}\|\nabla f(x_{k+1})-\nabla f(x_k)\|^2\nonumber\\
            &+\frac{1}{8}((t_k+\frac{2\beta}{\sqrt{L}})s_k)^2(\|\nabla f(x_{k+1})\|^2+\|e_{k+1}\|^2+2\langle \nabla f(x_{k+1}) ,e_k \rangle) \nonumber\\
            & -(\frac{t_k^2}{4}+\frac{\beta t_k}{2\sqrt{L}})\langle e_{k+1},x_{k+1}-x_k\rangle-\frac{(t_k+\tfrac{2\beta}{\sqrt{L}})s_k}{2}\langle e_{k+1},x_{k+1}-x^*\rangle\nonumber\\
         & -\frac{\beta t_k(t_k+\frac{2\beta}{\sqrt{L}})s_k}{4\sqrt{L}}\langle \nabla f(x_{k+1})+e_{k+1}, \nabla f(x_k)+e_k \rangle,\nonumber\\
         &=\left( \frac{(t_k+\frac{2\beta}{\sqrt{L}})s_k)^2}{8}- \frac{(\frac{t_{k}^2}{4}+\frac{\beta t_{k}}{2\sqrt{L}})}{2L}- \frac{(t_k+2\frac{\beta}{\sqrt{L}})s_k}{4L}\right)\|\nabla f(x_{k+1})\|^2\nonumber\\
         &+\frac{1}{2}\left(\frac{t_k^2}{4}+\frac{\beta t_k}{2\sqrt{L}} \right)(\frac{1}{L}-\frac{\beta s_k}{\sqrt{L}}) 2\langle \nabla f(x_{k+1}),\nabla f(x_k) \rangle\nonumber\\
         &-\frac{(\frac{t_{k}^2}{4}+\frac{\beta t_{k}}{2\sqrt{L}})}{2L}\|\nabla f(x_k)\|^2+\frac{((t_k+\frac{2\beta}{\sqrt{L}})s_k)^2}{8}(\|e_{k+1}\|^2+2\langle \nabla f(x_{k+1}) ,e_k \rangle)\nonumber\\
         & -(\frac{t_k^2}{4}+\frac{\beta t_k}{2\sqrt{L}})\langle e_{k+1},x_{k+1}-x_k\rangle-\frac{(t_k+\tfrac{2\beta}{\sqrt{L}})s_k}{2}\langle e_{k+1},x_{k+1}-x^*\rangle\nonumber\\
         &-\frac{\beta t_k(t_k+\frac{2\beta}{\sqrt{L}})s_k}{4\sqrt{L}}\left(\langle \nabla f(x_{k+1}),e_k \rangle+\langle \nabla f(x_{k}) , e_{k+1}\rangle+\langle e_{k+1},e_k\rangle\right).
        \end{align}
        \begin{lemma}\label{lem1}
            Consider $t_k=\sum_{i=0}^k s_i$ and $s_i=\frac{c}{i^{\alpha}}$ with $0<\alpha<1$ and $c\leq 1/\sqrt{L}$. For any $\beta\geq 1$ there exists $k_0$ such that 
            \begin{align}\label{lem1_eqn1}
                \left( \frac{((t_k+\frac{2\beta}{\sqrt{L}})s_k)^2}{8}- \frac{(\frac{t_{k}^2}{4}+\frac{\beta t_{k}}{2\sqrt{L}})}{2L}- \frac{(t_k+2\frac{\beta}{\sqrt{L}})s_k}{4L}\right)\leq -\left|\frac{1}{2}\left(\frac{t_k^2}{4}+\frac{\beta t_k}{2\sqrt{L}} \right)(\frac{1}{L}-\frac{\beta s_k}{\sqrt{L}})\right|
            \end{align}
        \end{lemma}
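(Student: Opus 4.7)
The plan is to reduce the stated inequality to a transparent form by exploiting the factorization $\tfrac{t_k^{2}}{4}+\tfrac{\beta t_k}{2\sqrt{L}}=\tfrac{t_k}{4}\bigl(t_k+\tfrac{2\beta}{\sqrt{L}}\bigr)$. Setting $a_k:=t_k+\tfrac{2\beta}{\sqrt{L}}$, both sides share the common factor $a_k$: the left-hand side rewrites as $\tfrac{a_k^{2}s_k^{2}}{8}-\tfrac{t_k a_k}{8L}-\tfrac{a_k s_k}{4L}$, and the right-hand side becomes $-\tfrac{t_k a_k}{8L}\bigl|1-\beta s_k\sqrt{L}\bigr|$.

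Next I would dispose of the absolute value. Because $s_k=c/k^{\alpha}\to 0$ and $c\sqrt{L}\leq 1$ with $\beta\geq 1$, one has $\beta s_k\sqrt{L}\leq\beta/k^{\alpha}$, which is at most $1$ as soon as $k\geq k_0:=\lceil\beta^{1/\alpha}\rceil$. On this range the absolute value resolves to $1-\beta s_k\sqrt{L}$. Substituting and dividing through by the positive quantity $\tfrac{a_k s_k}{8}$, elementary cancellation collapses the target to
\begin{equation*}
    t_k\Bigl(s_k-\tfrac{\beta}{\sqrt{L}}\Bigr)\;\leq\;\tfrac{2}{L}\bigl(1-\beta s_k\sqrt{L}\bigr).
\end{equation*}

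I would then close the proof by signing both sides. The left-hand side is non-positive for every $k$ because $s_k\leq c\leq 1/\sqrt{L}\leq \beta/\sqrt{L}$ (using $\beta\geq 1$), and the right-hand side is non-negative precisely for $k\geq k_0$ by construction, so the reduced inequality holds trivially, which proves the lemma with the explicit burn-in $k_0=\lceil\beta^{1/\alpha}\rceil$ that slots directly into the critical-iteration definition of \Cref{Theorem6}.

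The main obstacle is not analytical depth but bookkeeping around the absolute value: a naive calculation that drops it too early can select the wrong branch near the threshold $\beta s_k\sqrt{L}\approx 1$ and invert a decisive sign, so one must carry $|\,\cdot\,|$ intact until the cutoff $k_0$ is secured. Isolating the small-$k$ regime into this burn-in constant removes any need to estimate the cross terms $a_k^{2}s_k^{2}$ versus $t_k a_k/L$ by hand and makes every subsequent inequality sign-definite.
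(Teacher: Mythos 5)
Your proof is correct, and it takes a genuinely cleaner route than the paper's. The paper also restricts to the branch $\tfrac{1}{L}-\tfrac{\beta s_k}{\sqrt{L}}\geq 0$ (declared \emph{without loss of generality}, which really amounts to the same burn-in threshold you make explicit), and after the $\tfrac{1}{2L}(\tfrac{t_k^2}{4}+\tfrac{\beta t_k}{2\sqrt{L}})$ cancellation it expands the surviving expression into the four-term sum
\[
\frac{t_k^2 s_k}{8}\Bigl(s_k-\frac{\beta}{\sqrt{L}}\Bigr)+\frac{t_k\beta s_k}{2\sqrt{L}}\Bigl(s_k-\frac{\beta}{2\sqrt{L}}\Bigr)+\frac{\beta s_k}{2L}\Bigl(\beta s_k-\frac{1}{\sqrt{L}}\Bigr)-\frac{t_k s_k}{4L},
\]
showing each bracket is nonpositive under $\tfrac{2}{k^\alpha}\leq\beta\leq k^\alpha$. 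Your factorization $\tfrac{t_k^2}{4}+\tfrac{\beta t_k}{2\sqrt{L}}=\tfrac{t_k a_k}{4}$ with $a_k=t_k+\tfrac{2\beta}{\sqrt{L}}$ lets you divide the whole thing by the common positive factor $\tfrac{a_k s_k}{8}$ and collapse to the scalar inequality $t_k(s_k-\tfrac{\beta}{\sqrt{L}})\leq\tfrac{2}{L}(1-\beta s_k\sqrt{L})$, which closes immediately because the left side is $\leq 0$ (from $s_k\leq c\leq 1/\sqrt{L}\leq\beta/\sqrt{L}$) and the right side is $\geq 0$ once $k\geq k_0$. This avoids all term-by-term bookkeeping and produces an explicit $k_0=\lceil\beta^{1/\alpha}\rceil$, matching the paper's main condition $\beta\leq k_0^\alpha$; the paper additionally remarks on a slightly sharper $k_0$ obtained by transferring slack between its terms, an optimization your clean two-sided sign argument does not pursue but also does not need.
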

        \begin{proof}\label{lem1_proof}
            Without loss of generality, consider $\frac{1}{L}-\frac{\beta s_k}{\sqrt{L}}\geq 0$. Then, after simplification of (\ref{lem1_eqn1}) we have
            \begin{align}\label{lem1_eqn2}
                &\frac{((t_k+\frac{2\beta}{\sqrt{L}})s_k)^2}{8}- \frac{(t_k+2\frac{\beta}{\sqrt{L}})s_k}{4L}-\frac{1}{2}\left(\frac{t_k^2}{4}+\frac{\beta t_k}{2\sqrt{L}} \right)(\frac{\beta s_k}{\sqrt{L}})\leq 0\nonumber\\
                & \frac{t_k^2s_k}{8}(s_k-\frac{\beta}{\sqrt{L}})+\frac{t_k \beta s_k}{2\sqrt{L}}(s_k-\frac{\beta}{2\sqrt{L}})+\frac{\beta s_k}{2 L}(\beta s_k-\frac{1}{\sqrt{L}})-\frac{t_k s_k}{4L}\leq 0
            \end{align}
            which holds as long as $\frac{2}{k^{\alpha}}\leq \beta \leq k^{\alpha}$. Note that for any choice of $\beta$ there exists $k_0$ such that $\beta\leq k_0^{\alpha}$. Note we can improve this bound (in the sense that smaller \(k_0\) is needed) to $\beta\leq k_0^{\alpha}(\frac{c\sqrt{L}}{8}(\sum_{i=1}^{k_0}\frac{1}{i^{\alpha}})^2+\tfrac{1}{c\sqrt{L}})$. This is done by considering  \(\frac{t_k^2s_k}{8}(-\frac{\beta}{2\sqrt{L}})\) from the first term when showing the third term is negative. 
        \end{proof}
Next, using Lemma \ref{lem1} in (\ref{Lyap2_stc_6}) gives 
\begin{align}\label{Lyap2_stc_7}
    \varepsilon(k+1)-\varepsilon(k)&\leq \frac{((t_k+\frac{2\beta}{\sqrt{L}})s_k)^2}{8}(\|e_{k+1}\|^2+2\langle \nabla f(x_{k+1}) ,e_k \rangle)\nonumber\\
         & -(\frac{t_k^2}{4}+\frac{\beta t_k}{2\sqrt{L}})\langle e_{k+1},x_{k+1}-x_k\rangle-\frac{(t_k+\tfrac{2\beta}{\sqrt{L}})s_k}{2}\langle e_{k+1},x_{k+1}-x^*\rangle\nonumber\\
         &-\frac{\beta t_k(t_k+\frac{2\beta}{\sqrt{L}})s_k}{4\sqrt{L}}\left(\langle \nabla f(x_{k+1}),e_k \rangle+\langle \nabla f(x_{k}) , e_{k+1}\rangle+\langle e_{k+1},e_k\rangle\right).
\end{align}
    By evaluating the expectation of (\ref{Lyap2_stc_7}) we have
    \begin{align}\label{Lyap2_stc_8}
        \mathbb{E}\left[\varepsilon(k)-\varepsilon(k-1)\right]\leq \frac{((t_k+\frac{2\beta}{\sqrt{L}})s_k)^2}{8}\sigma^2.
    \end{align}
    Summing both sides from $k_0$ to $k$ gives
    \begin{align}\label{Lyap2_stc_9}
        \mathbb E[\varepsilon(k)]\leq \mathbb E[\varepsilon(k_0)]+\sum_{i=k_0+1}^{k}\frac{((t_i+\frac{2\beta}{\sqrt{L}})s_i)^2}{8}\sigma^2
    \end{align}
    As in (\ref{Lyap_stc_12},\ref{Lyap_stc_13},\ref{Lyap_stc_14}), one can bound (\ref{Lyap2_stc_9}) as
    \begin{align}\label{Lyap2_stc_10}
        \mathbb E[\varepsilon(k)]&\leq \mathbb E[\varepsilon(k_0)]+\frac{\sigma^2 c^4}{(1-\alpha)^2} \left[ k_0^{3-4\alpha}-k^{3-4\alpha} \right] + \frac{\sigma^2c^3\beta}{2\sqrt{L}(1-\alpha)(3\alpha -2)}\left[ k_0^{2-3\alpha}-k^{2-3\alpha} \right]\nonumber\\
        &+\frac{\beta^2c^2\sigma^2}{2L(2\alpha-1)}\left[ k_0^{1-2\alpha}-k^{1-2\alpha} \right]
    \end{align}
    for $\alpha > 3/4$ and
    \begin{align}\label{Lyap2_stc_11}
        \mathbb E[\varepsilon(k)]\leq \mathbb E[\varepsilon(k_0)]+2\sigma^2 c^4 \left[ \log(\frac{k}{k_0}) \right] + \frac{8\sigma^2c^3\beta}{\sqrt{L}}\left[ k_0^{-\tfrac{1}{4}}-k^{-\tfrac{1}{4}} \right]
        +\frac{\beta^2c^2\sigma^2}{L}\left[ k_0^{-\tfrac{1}{2}}-k^{-\tfrac{1}{2}} \right]
    \end{align}
    for $\alpha=3/4$. Therefore,
    \begin{align}\label{Lyap2_stc_12}
        \mathbb E[f(x_k)]-f(x^*)\leq \tfrac{\mathbb E[\varepsilon(k_0)]+\frac{\sigma^2 c^4}{(1-\alpha)^2} \left[ k_0^{3-4\alpha}-k^{3-4\alpha} \right] + \frac{\sigma^2c^3\beta}{2\sqrt{L}(1-\alpha)(3\alpha -2)}\left[ k_0^{2-3\alpha}-k^{2-3\alpha} \right]+\frac{\beta^2c^2\sigma^2}{2L(2\alpha-1)}\left[ k_0^{1-2\alpha}-k^{1-2\alpha} \right]}{\frac{c^2}{4(1-\alpha)^2}\left((k^{1-\alpha}-1)^2\right)+\frac{c\beta}{2\sqrt{L}(1-\alpha)}\left(k^{(1-\alpha)}-1\right)}
    \end{align}
    for $\alpha > 3/4$ and
    \begin{align}\label{Lyap2_stc_13}
        \mathbb E[f(x_k)]-f(x^*)\leq \tfrac{\mathbb E[\varepsilon(k_0)]+2\sigma^2 c^4 \left[ \log(\frac{k}{k_0}) \right] + \frac{8\sigma^2c^3\beta}{\sqrt{L}}\left[ k_0^{-1/4}-k^{-1/4} \right]
        +\frac{\beta^2c^2\sigma^2}{L}\left[ k_0^{-1/2}-k^{-1/2} \right]}{4c^2\left((k^{1/4}-1)^2\right)+\frac{2c\beta}{\sqrt{L}}\left(k^{1/4}-1\right)}
    \end{align}
    for $\alpha=3/4$.

\subsection{Proof of \Cref{Theorem5}}\label{thm6_proof}
Take the Lyapunov function
    \begin{align}\label{Lyap_stochastic}
    \varepsilon(k)= (\frac{t_k^2}{4}+\frac{s_kt_k}{2})(f(x_k)-f(x^*))+\frac{1}{2}\|v_k-x^*\|^2.
    \end{align}
    Next, we will bound the difference $\varepsilon(k+1)-\varepsilon(k)$. Using (\ref{Lyap_stochastic}) we have
    \begin{align}\label{Lyap_stc_1}
        \varepsilon(k+1)-\varepsilon(k)&=(\frac{t_{k+1}^2}{4}+\frac{s_{k+1}t_{k+1}}{2})(f(x_{k+1})-f(x^*))\nonumber\\
        &-(\frac{t_{k}^2}{4}+\frac{s_{k}t_{k}}{2})(f(x_{k})-f(x^*))+\frac{1}{2}(\|v_{k+1}-x^*\|^2-\|v_{k}-x^*\|^2),\nonumber\\
        & = (\frac{t_{k+1}^2-t_k^2}{4}+\frac{s_{k+1}t_{k+1}-t_ks_k}{2})(f(x_{k+1})-f(x^*))\nonumber\\
        &+(\frac{t_{k}^2}{4}+\frac{s_{k}t_{k}}{2})(f(x_{k+1})-f(x_k))+\frac{1}{2}(\|v_{k+1}-x^*\|^2-\|v_{k}-x^*\|^2),\nonumber\\
        &= (\frac{t_{k+1}^2-t_k^2}{4}+\frac{s_{k+1}t_{k+1}-t_ks_k}{2})(f(x_{k+1})-f(x^*))\nonumber\\
        &+(\frac{t_{k}^2}{4}+\frac{s_{k}t_{k}}{2})(f(x_{k+1})-f(x_k))+\frac{1}{2}(\|v_{k+1}-v_k\|^2+2\langle v_{k+1}-v_k,v_k-x^*\rangle),
    \end{align}
    where in the last equality we used 
    $$\langle a-b,a-c\rangle = \frac{1}{2}(\|a-b\|^2+\|a-c\|^2-\|b-c\|^2).$$
    Next, from the update (\ref{new_algorithm_stochastic}) we have
    \begin{align}\label{Lyap_stc_2}
        \left\{\begin{array}{cl}
             v_k-x^*=&\frac{t_k}{2s_k}(x_{k+1}-x_k)+x_{k+1}-x^*+\frac{t_k}{2\sqrt{L}}(\nabla f(x_k)+e_k),  \\
            v_{k+1}-v_k=& -\frac{1}{2}(t_k+2s_k)s_k (\nabla f(x_{k+1})+e_{k+1}).
        \end{array}\right.
    \end{align}
    Using (\ref{Lyap_stc_2}) in (\ref{Lyap_stc_1}) we have
    \begin{align}\label{Lyap_stc_3}
        \varepsilon(k+1)-\varepsilon(k)&=(\frac{t_{k+1}^2-t_k^2}{4}+\frac{s_{k+1}t_{k+1}-t_ks_k}{2})(f(x_{k+1})-f(x^*))\nonumber\\
        &+(\frac{t_{k}^2}{4}+\frac{s_{k}t_{k}}{2})(f(x_{k+1})-f(x_k))+\frac{1}{8}((t_k+2s_k)s_k)^2\|\nabla f(x_{k+1})+e_{k+1}\|^2\nonumber\\
        &-\frac{1}{2}\langle (t_k+2s_k)s_k (\nabla f(x_{k+1})+e_{k+1}), \frac{t_k}{2s_k}(x_{k+1}-x_k)+x_{k+1}-x^*\nonumber\\
        &+\frac{t_k}{2\sqrt{L}}(\nabla f(x_k)+e_k)\rangle.
    \end{align}
    Now, using (\ref{smooth_convex}) in (\ref{Lyap_stc_3}) we get
    \begin{align}\label{Lyap_stc_4}
         \varepsilon(k+1)-\varepsilon(k)&\leq (\frac{t_{k+1}^2-t_k^2}{4}+\frac{s_{k+1}t_{k+1}-t_ks_k}{2})(\langle \nabla f(x_{k+1}),x_{k+1}-x^* \rangle-\frac{1}{2L}\|\nabla f(x_{k+1})\|^2)\nonumber\\
         & +(\frac{t_{k}^2}{4}+\frac{s_{k}t_{k}}{2})(\langle \nabla f(x_{k+1}),x_{k+1}-x_k \rangle-\frac{1}{2L}\|\nabla f(x_{k+1})-\nabla f(x_k)\|^2)\nonumber\\
         & +\frac{1}{8}((t_k+2s_k)s_k)^2(\|\nabla f(x_{k+1})\|^2+\|e_{k+1}\|^2+2\langle \nabla f(x_{k+1}) ,e_k \rangle) \nonumber\\
         & -(\frac{t_k^2}{4}+\frac{s_kt_k}{2})\langle \nabla f(x_{k+1})+e_{k+1},x_{k+1}-x_k\rangle\nonumber\\
         &-\frac{(t_k+2s_k)s_k}{2}\langle \nabla f(x_{k+1})+e_{k+1},x_{k+1}-x^*\rangle\nonumber\\
         & -\frac{t_k(t_k+2s_k)s_k}{4\sqrt{L}}\langle \nabla f(x_{k+1})+e_{k+1}, \nabla f(x_k)+e_k \rangle.
         \end{align}
         Now, note that in (\ref{Lyap_stc_4}) the terms containing $\langle \nabla f(x_{k+1}),x_{k+1}-x_k \rangle$ disappear. Due to $t_k=\sum_{i=1}^k s_k$, we get $t_{k+1}^2=t_k^2 + 2t_ks_{k+1}+ s_{k+1}^2$ and $s_{k+1}t_{k+1}=s_{k+1}t_k+s_{k+1}^2$. Also, note  that by definition $s_{k+1}\leq s{k}$ as long as $0<\alpha<1$. Thus,
         $$\left(\frac{t_{k+1}^2-t_k^2}{4}+\frac{s_{k+1}t_{k+1}-t_ks_k}{2} -\frac{(t_k+2s_k)s_k}{2}\right)\leq 0.$$
         Then, due to convexity and smoothness of $f$ we get
         \begin{align}\label{Lyap_stc_5}
             \left(\frac{t_{k+1}^2-t_k^2}{4}\right.&\left.+\frac{s_{k+1}t_{k+1}-t_ks_k}{2} -\frac{(t_k+2s_k)s_k}{2}\right)\langle \nabla f(x_{k+1}), x_{k+1}-x^* \rangle\nonumber\\
             &\leq \frac{\left(\frac{t_{k+1}^2-t_k^2}{4}+\frac{s_{k+1}t_{k+1}-t_ks_k}{2} -\frac{(t_k+2s_k)s_k}{2}\right)}{2L}\|\nabla f(x_{k+1})\|^2.
         \end{align} 
        Replacing (\ref{Lyap_stc_5}) in (\ref{Lyap_stc_4}) and simplification gives
        \begin{align}\label{Lyap_stc_6}
            \varepsilon(k+1)-\varepsilon(k)&\leq -\frac{(t_k+2s_k)s_k}{4L} \|\nabla f(x_{k+1})\|^2-\frac{(\frac{t_{k}^2}{4}+\frac{s_{k}t_{k}}{2})}{2L}\|\nabla f(x_{k+1})-\nabla f(x_k)\|^2\nonumber\\
            &+\frac{1}{8}((t_k+2s_k)s_k)^2(\|\nabla f(x_{k+1})\|^2+\|e_{k+1}\|^2+2\langle \nabla f(x_{k+1}) ,e_k \rangle) \nonumber\\
            & -(\frac{t_k^2}{4}+\frac{s_kt_k}{2})\langle e_{k+1},x_{k+1}-x_k\rangle-\frac{(t_k+2s_k)s_k}{2}\langle e_{k+1},x_{k+1}-x^*\rangle\nonumber\\
         & -\frac{t_k(t_k+2s_k)s_k}{4\sqrt{L}}\langle \nabla f(x_{k+1})+e_{k+1}, \nabla f(x_k)+e_k \rangle,\nonumber\\
         &=\frac{1}{2}\left(\frac{t_k}{2}+s_k\right)^2(s_k^2-\frac{1}{L})\|\nabla f(x_{k+1})\|^2\nonumber\\
         &+\frac{1}{2}\left(\frac{t_k^2}{4}+\frac{s_kt_k}{2} \right)(\frac{1}{L}-\frac{s_k}{\sqrt{L}}) 2\langle \nabla f(x_{k+1}),\nabla f(x_k) \rangle\nonumber\\
         &-\frac{(\frac{t_{k}^2}{4}+\frac{s_{k}t_{k}}{2})}{2L}\|\nabla f(x_k)\|^2+\frac{1}{8}((t_k+2s_k)s_k)^2(\|e_{k+1}\|^2+2\langle \nabla f(x_{k+1}) ,e_k \rangle)\nonumber\\
         & -(\frac{t_k^2}{4}+\frac{s_kt_k}{2})\langle e_{k+1},x_{k+1}-x_k\rangle-\frac{(t_k+2s_k)s_k}{2}\langle e_{k+1},x_{k+1}-x^*\rangle\nonumber\\
         &-\frac{t_k(t_k+2s_k)s_k}{4\sqrt{L}}\left(\langle \nabla f(x_{k+1}),e_k \rangle+\langle \nabla f(x_{k}) , e_{k+1}\rangle+\langle e_{k+1},e_k\rangle\right).
        \end{align}
        Here, the proof divides in 2 sections for each of the results in \Cref{Theorem5}. First, we prove the rate for $\mathbb E\left[f(x_k)\right]-f(x^*)$. 
        
        a) Taking $c\leq 1/\sqrt{L}$ in $s_k=c/k^{\alpha}$, we get $s_k\leq 1/\sqrt{L}$. This implies 
        \begin{align}\label{Lyap_stc_7}
            -\frac{1}{2}\left(\frac{t_k}{2}+s_k\right)^2(\frac{1}{L}-s_k^2)&\leq -\frac{1}{2}\left(\frac{t_k^2}{4}+\frac{s_kt_k}{2} \right)(\frac{1}{L}-\frac{s_k}{\sqrt{L}}),\nonumber\\
            -\frac{1}{2L}\left(\frac{t_{k}^2}{4}+\frac{s_{k}t_{k}}{2}\right)&\leq -\frac{1}{2}\left(\frac{t_k^2}{4}+\frac{s_kt_k}{2} \right)(\frac{1}{L}-\frac{s_k}{\sqrt{L}}).
        \end{align}
        Utilizing (\ref{Lyap_stc_7}) in (\ref{Lyap_stc_6}) gives
        \begin{align}\label{Lyap_stc_8}
            \varepsilon(k+1)-\varepsilon(k)&\leq -\frac{1}{2}\left(\frac{t_k^2}{4}+\frac{s_kt_k}{2} \right)(\frac{1}{L}-\frac{s_k}{\sqrt{L}})\|\nabla f(x_{k+1})-\nabla f(x_k)\|^2\nonumber\\
            &\frac{1}{8}((t_k+2s_k)s_k)^2(\|e_{k+1}\|^2)+\frac{1}{4}((t_k+2s_k)s_k)^2\langle \nabla f(x_{k+1}) ,e_k \rangle)\nonumber\\
         & -(\frac{t_k^2}{4}+\frac{s_kt_k}{2})\langle e_{k+1},x_{k+1}-x_k\rangle-\frac{(t_k+2s_k)s_k}{2}\langle e_{k+1},x_{k+1}-x^*\rangle\nonumber\\
         &-\frac{t_k(t_k+2s_k)s_k}{4\sqrt{L}}\left(\langle \nabla f(x_{k+1}),e_k \rangle+\langle \nabla f(x_{k}) , e_{k+1}\rangle+\langle e_{k+1},e_k\rangle\right),\nonumber\\
         &\leq \frac{1}{8}((t_k+2s_k)s_k)^2\|e_{k+1}\|^2+\frac{1}{4}((t_k+2s_k)s_k)^2\langle \nabla f(x_{k+1}) ,e_k \rangle)\nonumber\\
         & -(\frac{t_k^2}{4}+\frac{s_kt_k}{2})\langle e_{k+1},x_{k+1}-x_k\rangle-\frac{(t_k+2s_k)s_k}{2}\langle e_{k+1},x_{k+1}-x^*\rangle\nonumber\\
         &-\frac{t_k(t_k+2s_k)s_k}{4\sqrt{L}}\left(\langle \nabla f(x_{k+1}),e_k \rangle+\langle \nabla f(x_{k}) , e_{k+1}\rangle+\langle e_{k+1},e_k\rangle\right),\nonumber\\
         &\leq \frac{s_k(t_k+2s_k)}{2}\left(\frac{(t_k+2s_k)s_k}{4}\|e_{k+1}\|^2+\frac{s_k(t_k+2s_k)}{2}+\langle \nabla f(x_{k+1}) ,e_k \rangle\right.\nonumber\\
         &-\frac{t_k}{s_k}\langle e_{k+1},x_{k+1}-x_k\rangle-\langle e_{k+1},x_{k+1}-x^*\rangle\nonumber\\
         &\left.-\frac{t_k}{2\sqrt{L}}\left(\langle \nabla f(x_{k+1}),e_k \rangle+\langle \nabla f(x_{k}) , e_{k+1}\rangle+\langle e_{k+1},e_k\rangle\right)\right)\nonumber\\
         &=\frac{s_k(t_k+2s_k)}{2}g_k.
        \end{align}
        Next, note that $\mathbb E[g_k]= \frac{s_k(t_k+2s_k)}{4}\sigma^2$ and therefore, $\mathbb E [\varepsilon(k+1)]-\mathbb E[ \varepsilon (k)]\leq \frac{s_k^2(t_k+2s_k)^2}{8}\sigma^2$. Also, by the form of $\varepsilon(k)$ in (\ref{Lyap_stochastic}) we have $$(\frac{t_k^2}{4}+\frac{t_{k}s_k}{2})(\mathbb E[f(x_k)]-f(x^*))\leq \mathbb E[\varepsilon(k)].$$
        Thus, forming a telescope summation leads to
        \begin{align}\label{Lyap_stc_9}
            (\frac{t_k^2}{4}+\frac{t_{k}s_k}{2})(\mathbb E[f(x_k)]-f(x^*))\leq\mathbb E[\varepsilon(k)]\leq \mathbb E[\varepsilon(0)] + \sum_{i=1}^{k-1} \frac{s_i^2(t_i+2s_i)^2}{8}\sigma^2,
        \end{align}
        with $s_0=t_0=0$. From (\ref{Lyap_stc_9}) one can get
        \begin{align}\label{Lyap_stc_10}
            \mathbb E[f(x_k)]-f(x^*) \leq \frac{\mathbb E[\varepsilon(0)] + \sum_{i=1}^{k-1} \frac{s_i^2(t_i+2s_i)^2}{8}\sigma^2}{(\frac{t_k^2}{4}+\frac{t_{k}s_k}{2})}.
        \end{align}
        Now, we should bound $\sum_{i=1}^{k-1} s_i^2(t_i+2s_i)^2$. Note that 
        $$\sum_{i=1}^{k-1} s_i^2(t_i+2s_i)^2=\sum_{i=1}^{k-1} s_i^2t_i^2+4t_is_i^3+4s_i^4,$$
        and
        $$t_i=\left(\sum_{j=1}^i\frac{c}{j^{\alpha}}\right)\leq \left(\int_{0}^{i} \frac{c}{t^{\alpha}}dt\right)=\frac{ci^{1-\alpha}}{(1-\alpha)},$$
        $$t_i^2=\left(\sum_{j=1}^i\frac{c}{j^{\alpha}}\right)^2\leq \left(\int_{0}^{i} \frac{c}{t^{\alpha}}dt\right)^2=\frac{c^2i^{2-2\alpha}}{(1-\alpha)^2}.$$
        Therefore, for the first term we have 
        \begin{align}\label{Lyap_stc_11}
            \sum_{i=1}^{k-1} s_i^2t_i^2&\leq \frac{c^4}{(1-\alpha)^2}\sum_{i=1}^{k-1} \frac{1}{i^{4\alpha -2}}\leq \frac{c^4}{(1-\alpha)^2}(1+\int_{1}^{k} \frac{1}{t^{4\alpha -2}}dt)\leq \frac{c^4}{(1-\alpha)^2}(1+\frac{1}{4\alpha-3})\nonumber\\
            &=\frac{c^4(4\alpha -2)}{(1-\alpha)^2(4\alpha-3)},
        \end{align}
        when $\alpha > 3/4$ and if $\alpha=3/4$ we have
        \begin{align}\label{Lyap_stc_12}
            \sum_{i=1}^{k-1} s_i^2t_i^2&\leq 16c^4\sum_{i=1}^{k-1}\frac{1}{i}\leq 16c^4(1+\log (k)).
        \end{align}
        For the second term we have
        \begin{align}\label{Lyap_stc_13}
            \sum_{i=1}^{k-1} s_i^3t_i&\leq \frac{c^4}{(1-\alpha)}\sum_{i=1}^{k-1} \frac{1}{i^{4\alpha -1}}\leq \frac{c^4}{(1-\alpha)}(1+\int_1^k\frac{1}{t^{4\alpha -1}}dt)\leq \frac{c^4(4\alpha -1)}{(1-\alpha)(4\alpha -2)}
         \end{align}
         for $\alpha > 1/2$. The third term gives
        \begin{align}\label{Lyap_stc_14}
            \sum_{i=1}^{k-1} s_i^4&\leq c^4\sum_{i=1}^{k-1} \frac{1}{i^{4\alpha }}\leq c^4(1+\int_1^k\frac{1}{t^{4\alpha }}dt)\leq \frac{c^4(4\alpha )}{(4\alpha -1)},
         \end{align} 
         for $\alpha > 1/4$.\par
         For the terms in denominator of (\ref{Lyap_stc_10}) we use lower bounds as
         \begin{align}\label{Lyap_stc_15}
             \frac{t_k^2}{4}\geq \frac{c^2}{4(1-\alpha)^2}(k^{1-\alpha}-1)^2,\nonumber\\
             \frac{t_ks_k}{2}\geq \frac{c^2k^{-\alpha}}{2(1-\alpha)}(k^{1-\alpha}-1).
         \end{align}
Using(\ref{Lyap_stc_11},\ref{Lyap_stc_12},\ref{Lyap_stc_13},\ref{Lyap_stc_14},\ref{Lyap_stc_15}) in (\ref{Lyap_stc_10}) leads to
\begin{align}\label{Lyap_stc_16}
    \mathbb E[f(x_k)]-f(x^*)\leq \left\{\begin{array}{lr}
         \frac{\mathbb E[\varepsilon (0)]+\frac{c^4\sigma^2}{8}\left[16(1+\log(k))+32+6\right]}{2c^2\left[2(k^{\frac{1}{4}}-1)^2+k^{-\frac{3}{4}}(k^{\frac{1}{4}}-1)\right]} & \quad \alpha=\frac{3}{4} \\
          \frac{\mathbb E[\varepsilon (0)]+\frac{c^4\sigma^2}{8}\left[\frac{(4\alpha -2)}{(1-\alpha)^2(4\alpha-3)}+\frac{4(4\alpha -1)}{(1-\alpha)(4\alpha -2)}+\frac{4(4\alpha )}{(4\alpha -1)}\right]}{\frac{c^2}{2(1-\alpha)}\left[\frac{(k^{1-\alpha}-1)^2}{2(1-\alpha)}+k^{-\alpha}(k^{(1-\alpha)}-1)\right]}& \quad 1>\alpha>\frac{3}{4}
    \end{array}\right.
\end{align}
with $\mathbb E[\varepsilon (0)]=\frac{1}{2}\|v_0-x^*\|^2$.

b) On the transition from (\ref{Lyap_stc_6}) to (\ref{Lyap_stc_8}), one can write
\begin{align}\label{stc_GNM1}
    \varepsilon(k+1)-\varepsilon(k)&\leq \frac{s_k(t_k+2s_k)}{2}g_k-\frac{1}{2}\left( \frac{t_k^2}{4}+\frac{s_kt_k}{2} \right)(\frac{1}{L}-\frac{s_k}{\sqrt{L}})\|\nabla f(x_{k+1})-\nabla f(x_k)\|^2\nonumber\\
    &-\frac{1}{2}\left(\frac{t_k^2}{4}+\frac{s_kt_k}{2}\right)(\frac{s_k}{\sqrt{L}})\|\nabla f(x_k)\|^2\nonumber\\
    &\leq \frac{s_k(t_k+2s_k)}{2}g_k-\frac{1}{2}\left(\frac{t_k^2}{4}+\frac{s_kt_k}{2}\right)(\frac{s_k}{\sqrt{L}})\|\nabla f(x_k)\|^2
\end{align}
Recursively summing (\ref{stc_GNM1}) from $0$ to $k$ gives
\begin{align}\label{stc_GNM2}
    0\leq\varepsilon(k)\leq \varepsilon(0)+ \sum_{i=0}^{k-1} \frac{s_i(t_i+2s_i)}{2}g_i -\frac{1}{2}\sum_{i=0}^{k-1}\left(\frac{t_i^2}{4}+\frac{s_it_i}{2}\right)(\frac{s_i}{\sqrt{L}})\|\nabla f(x_i)\|^2,
\end{align}
which results in
\begin{align}\label{stc_GNM3}
    \frac{1}{2}\sum_{i=0}^{k-1}\left(\frac{t_i^2}{4}+\frac{s_it_i}{2}\right)(\frac{s_i}{\sqrt{L}})\|\nabla f(x_i)\|^2\leq \varepsilon(0)+ \sum_{i=0}^{k-1} \frac{s_i(t_i+2s_i)}{2}g_i,
\end{align}
and therefore,
\begin{align}\label{stc_GNM4}
    \min_{0\leq i\leq k-1}\|\nabla f(x_i)\|^2 \leq \frac{\varepsilon(0)+ \sum_{i=0}^{k-1} \frac{s_i(t_i+2s_i)}{2}g_i}{\frac{1}{2\sqrt{L}}\sum_{i=0}^{k-1}\left(\frac{t_i^2}{4}+\frac{s_it_i}{2}\right)s_i},
\end{align}
Evaluating the expectation of both hand sides gives
\begin{align}
    \mathbb E\left[\min_{0\leq i\leq k-1}\|\nabla f(x_i)\|^2 \right]&\leq \frac{\mathbb E\left[\varepsilon(0)\right]+ \sum_{i=0}^{k-1} \frac{s_i(t_i+2s_i)}{2}\mathbb E\left[g_i\right]}{\frac{1}{2\sqrt{L}}\sum_{i=0}^{k-1}\left(\frac{t_i^2}{4}+\frac{s_it_i}{2}\right)s_i},\nonumber\\
    &= \frac{\mathbb E\left[\varepsilon(0)\right]+ \sum_{i=0}^{k-1} \frac{s_i^2(t_i+2s_i)^2}{8}\sigma^2}{\frac{1}{2\sqrt{L}}\sum_{i=0}^{k-1}\left(\frac{t_i^2}{4}+\frac{s_it_i}{2}\right)s_i},\nonumber
\end{align}
and the last equality is due to $\mathbb E[g_i]= \frac{s_i(t_i+2s_i)}{4}\sigma^2$. Next, we will bound the numerator from above and lower bound the denominator as 
\begin{align}\label{stc_GNM5}
    \sum_{i=0}^{k-1} \frac{s_i^2(t_i+2s_i)^2}{8}\sigma^2\overset{(\ref{Lyap_stc_12},\ref{Lyap_stc_13},\ref{Lyap_stc_14})}{\leq}2\sigma^2c^4(1+\log (k))+\frac{\sigma^2c^4(4\alpha -1)}{2(1-\alpha)(4\alpha -2)}+\frac{\sigma^2c^4(4\alpha )}{2(4\alpha -1)}
\end{align}
for $\alpha=3/4$. For bounding the denominator one can use (\ref{Lyap_stc_15}) as
        \begin{align}\label{stc_GNM6}
             \frac{s_it_i^2}{4}\geq \frac{c^3i^{-\alpha}}{4(1-\alpha)^2}(i^{1-\alpha}-1)^2,\nonumber\\
             \frac{t_is_i^2}{2}\geq \frac{c^3i^{-2\alpha}}{2(1-\alpha)}(i^{1-\alpha}-1).
         \end{align}
Applying the summation to (\ref{stc_GNM6}) gives
\begin{align}\label{stc_GNM7}
    \sum_{i=0}^{k-1}\left(\frac{s_it_i^2}{4}+\frac{s_i^2t_i}{2}\right)&\geq  \sum_{i=0}^{k-1} \frac{c^3}{4(1-\alpha)^2}(i^{2-3\alpha}-2i^{1-2\alpha}+i^{-\alpha})\nonumber\\
    & + \sum_{i=0}^{k-1} \frac{c^3}{2(1-\alpha)}(i^{1-3\alpha}-i^{-2\alpha})\nonumber\\
    &\geq \frac{c^3}{4(1-\alpha)^2}\left( \frac{k^{3(1-\alpha)}-1}{3-3\alpha} +\frac{(k^{1-\alpha}-1)}{1-\alpha}+\frac{1-2\alpha+k^{2-2\alpha}}{1-\alpha}\right)\nonumber\\
    &+\frac{c^3}{2(1-\alpha)}\left( \frac{k^{2-3\alpha}-1}{2-3\alpha} +\frac{(2\alpha-k^{1-2\alpha)}}{1-2\alpha}\right)\nonumber\\
    &\overset{\alpha=\tfrac{3}{4}}{=} 4c^3\left( \frac{k^{3/4}-1}{3/4} +\frac{(k^{1/4}-1)}{1/4}+\frac{-\tfrac{1}{2}+k^{1/2}}{1/4}\right)\nonumber\\
    &+2c^3\left( -4k^{-1/4}+4 -3+2k^{-1/2}\right)\nonumber\\
    &\geq 4c^3\left( \frac{k^{3/4}-1}{3/4} +\frac{(k^{1/4}-1)}{1/4}+\frac{-\tfrac{1}{2}+k^{1/2}}{1/4}\right)
\end{align}
Combining (\ref{stc_GNM7}) and (\ref{stc_GNM5}) gives
\begin{align}\label{stc_GNM8}
  2\sqrt{L}  \frac{ \sum_{i=0}^{k-1} \frac{s_i^2(t_i+2s_i)^2}{8}\sigma^2}{\sum_{i=0}^{k-1}\left(\frac{s_it_i^2}{4}+\frac{s_i^2t_i}{2}\right)}&\leq 2\sqrt{L}\frac{2\sigma^2c^4(1+\log (k))+\frac{\sigma^2c^4(4\alpha -1)}{2(1-\alpha)(4\alpha -2)}+\frac{\sigma^2c^4(4\alpha )}{2(4\alpha -1)}}{4c^3\left( \frac{k^{3/4}-1}{3/4} +\frac{(k^{1/4}-1)}{1/4}+\frac{-\tfrac{1}{2}+k^{1/2}}{1/4}\right)}\nonumber\\
  &\overset{\alpha=\tfrac{3}{4}}{=} 2c\sigma^2\sqrt{L}\frac{2\log (k)+6+\frac{3}{4}}{16\left( \frac{k^{3/4}-1}{3} +k^{1/4}-\frac{3}{2}+k^{1/2}\right)},
\end{align}
and therefore,
\begin{align}
    \mathbb E\left[\min_{0\leq i\leq k-1}\|\nabla f(x_i)\|^2 \right] &\leq \frac{2\sqrt{L}\mathbb E[\varepsilon (0)]}{16c^3\left( \frac{k^{3/4}-1}{3} +k^{1/4}-\frac{3}{2}+k^{1/2}\right)}\nonumber\\
    &+2c\sigma^2\sqrt{L}\frac{2\log (k)+6+\frac{3}{4}}{16\left( \frac{k^{3/4}-1}{3} +k^{1/4}-\frac{3}{2}+k^{1/2}\right)},
\end{align}
with $\mathbb E[\varepsilon(0)]=\tfrac{1}{2}\|x_0-x^*\|^2$.
 
\section*{Acknowledgements}
Alp Yurtsever and Hoomaan Maskan received support from the Wallenberg AI, Autonomous Systems and Software Program (WASP) funded by the Knut and Alice Wallenberg Foundation. Konstantinos C. Zygalakis acknowledges support from a Leverhulme Research Fellowship (RF/2020-310), and the EPSRC grant EP/V006177/1. We gratefully acknowledge the support of NVIDIA Corporation with the
donation of 2 Quadro RTX 6000 GPUs used for this research.

\bibliographystyle{abbrvnat}
\bibliography{references}

\end{document}